\documentclass[12pt,reqno]{amsart}
\usepackage[T1]{fontenc}
\usepackage{lmodern}
\usepackage[expansion=false]{microtype}
\usepackage{amsmath,amssymb,amsthm,mathtools}
\usepackage{enumitem}
\usepackage{xcolor}
\usepackage[colorlinks=true,linkcolor=blue!55!black,citecolor=blue!55!black,urlcolor=blue!65!black]{hyperref}
\usepackage[nameinlink,capitalise,noabbrev]{cleveref}
\hypersetup{
  pdftitle={Jacobian layers and higher Milnor sequences: rigidity, stabilization, and log-concavity},
  pdfauthor={Zhiwen Liu},
  pdfsubject={Higher Milnor and Tjurina sequences of isolated hypersurface singularities},
  pdfkeywords={isolated hypersurface singularity, higher Milnor number, Jacobian ideal, semi-homogeneous singularity, log-concavity}
}

\numberwithin{equation}{section}

\newtheorem{theorem}{Theorem}[section]
\newtheorem{proposition}[theorem]{Proposition}
\newtheorem{corollary}[theorem]{Corollary}

\newtheorem{maintheorem}{Main Theorem}

\crefname{maintheorem}{Main Theorem}{Main Theorems}
\Crefname{maintheorem}{Main Theorem}{Main Theorems}
\theoremstyle{definition}
\newtheorem{definition}[theorem]{Definition}
\newtheorem{example}[theorem]{Example}
\theoremstyle{remark}
\newtheorem{remark}[theorem]{Remark}

\newcommand{\C}{\mathbb C}

\newcommand{\m}{\mathfrak m}
\newcommand{\length}{\operatorname{length}}
\newcommand{\ord}{\operatorname{ord}}
\newcommand{\gr}{\operatorname{gr}}
\newcommand{\reg}{\operatorname{reg}}
\newcommand{\inm}{\operatorname{in}_{\m}}

\title[Higher Milnor sequences]{Jacobian layers and higher Milnor sequences:\\rigidity, stabilization, and log-concavity}
\author{Zhiwen Liu}
\address{Beijing Institute of Mathematical Sciences and Applications, Beijing 100084, China}
\email{liuzhiwen@bimsa.cn}

\author{Stephen S.-T. Yau}
\address{Department of Mathematical Sciences, Tsinghua University, Beijing 100084, China}
\email{yau@uic.edu}

\author{Mingyi Zhang}
\address{Primary affiliation:
Center for Mathematics and Interdisciplinary Sciences, Fudan University, Shanghai 200433, China}
\curraddr{
\textsc{Shanghai Institute for Mathematics and Interdisciplinary Sciences (SIMIS), Shanghai 200433, China}}
\email{mingyizhang@simis.cn}

\thanks{Yau is supported by the Tsinghua University Education Foundation.}
\date{}

\subjclass[2020]{Primary 32S05, 14B05; Secondary 13D02, 13H15}
\keywords{isolated hypersurface singularity, higher Milnor number, Jacobian ideal, semi-homogeneous singularity, log-concavity}

\begin{document}

\begin{abstract}
Let $f$ be an isolated hypersurface singularity and let $J(f)$ be its Jacobian ideal.
We study the higher Milnor numbers
\[
 \mu_k(f)=\dim_{\C}\mathcal O_n/\m^kJ(f).
\]
Our main result concerns an ordinary semi-homogeneous singularity
$f=f_r+f_{r+1}+\cdots$, where the projective hypersurface defined by $f_r$ is smooth.
We prove that
\[
 \mu_k(f)=\mu_k(f_r)
 \qquad\text{for every }k\geq0.
\]
This gives the equal-weight case of a conjecture of Shi, Yau, and Zuo.
The main algebraic input is the common-order case of the
Valabrega--Valla criterion.  From it, we obtain the graded Jacobian
module and the exact intersection formula.
As a consequence, we obtain a closed formula for $\mu_k$ and determine
the sharp stabilization index of $\m^kJ(f)$.
We also determine the optimal uniform range in which the Tjurina
formula holds for all ordinary semi-homogeneous germs with fixed
\(n\) and \(r\).

We next consider the log-concavity property.
For ordinary semi-homogeneous singularities, the higher Milnor sequence
has an initial log-convex part and a log-concave tail.
For an arbitrary isolated hypersurface singularity in \(n\geq2\)
variables, both the higher Milnor and higher Tjurina sequences are
eventually strictly log-concave.
In the plane curve case, we prove that the log-concavity defects change
sign at most once, and we characterize the sequences which are log-concave
at every positive index.
Finally, we give examples showing that the higher Milnor sequence does not
determine the contact-equivalence class.
\end{abstract}

\maketitle

\section{Introduction}
The study of isolated hypersurface singularities has a long history in which
topological methods and local algebra are closely related.  On the topological
side, Milnor associated a fibration to an isolated hypersurface singularity and
showed that the dimension of the Jacobian algebra is the number of vanishing
cycles, or equivalently the middle Betti number of the Milnor fiber
\cite{Milnor1968}.  On the analytic side, the Tjurina algebra gives the
first-order deformation space of the hypersurface germ; see, for example,
\cite{GreuelLossenShustin2007}.  The Mather--Yau theorem further shows that, over
the complex numbers, the isomorphism type of the Tjurina algebra determines the
 contact-equivalence class \cite{MatherYau1982}.  These classical results show that the local algebras
defined by a function and its partial derivatives contain both geometric and
analytic information.
Let
\[
 R=\mathcal O_n=\C\{x_1,\ldots,x_n\},\qquad
 \m=(x_1,\ldots,x_n),
\]
where \(n\geq2\), and let \(f\in\m^2\) have an isolated critical point at
the origin.  We PJMwrite
\[
 J(f)=(f_{x_1},\ldots,f_{x_n}).
\]
For \(k\geq0\), the higher Milnor and Tjurina numbers are
\begin{equation}\label{eq:def-higher-milnor}
 \mu_k(f)=\dim_{\C}\frac{R}{\m^kJ(f)},\qquad
 \tau_k(f)=\dim_{\C}\frac{R}{(f,\m^kJ(f))}.
\end{equation}
Thus \(\mu_0=\mu\) and \(\tau_0=\tau\).  The classical numbers record only
the total lengths of the Milnor and Tjurina algebras.  By inserting the powers
of the maximal ideal, one keeps the successive infinitesimal layers of the
Jacobian ideal.  In this sense, the two sequences in
\eqref{eq:def-higher-milnor} form an \(\m\)-adic refinement of the classical
invariants.

Higher Tjurina algebras were used by Greuel and Pham in their extension
of the Mather--Yau theorem to positive characteristic.  They proved that a
sufficiently high higher Tjurina algebra determines the germ up to contact
equivalence, and obtained a corresponding statement for higher Milnor algebras and
right equivalence \cite{GreuelPham2017}.  Their result concerns the algebra
structure.  In the present paper we are mainly concerned with the dimensions
of these algebras and with the information contained in the resulting numerical
sequences.

The numerical study was developed further in a series of recent works.
Hussain, Liu, Yau, and Zuo computed the higher Milnor and Tjurina
numbers of binomial plane singularities
\cite{HussainLiuYauZuo2023}.  They also proved that a weighted
homogeneous germ satisfies
\[
 \mu_k(f)-\tau_k(f)=\binom{n+k-2}{n}.
\]

Shi, Yau, and Zuo then studied these
ideals in the framework of \(T\)-maps and considered their behavior under
semi-quasi-homogeneous perturbations \cite{ShiYauZuo2024}.  More recently, Hu,
Yau, and Zuo gave a deformation-theoretic meaning to the higher Tjurina numbers
by introducing deformations with a fixed infinitesimal neighborhood
\cite{HuYauZuo2025}.  In particular, their Main Theorem~A describes the tangent
space of the \(k\)-th deformation functor by
\[
 T\mathbf{Def}^{V(f)}_k
 \simeq \frac{(f,\m^k)}{(f,\m^kJ(f))}.
\]
For weighted homogeneous isolated hypersurface singularities, their
Main Theorem~B gives the initial values of \(\mu_k\) and \(\tau_k\),
and Main Theorem~C expresses the full generating series through
the associated graded Jacobian module.  Main Theorem~D treats weighted
homogeneous germs in three variables.  Related qualitative
questions about these sequences and their associated derivation Lie algebras
were considered in \cite{HussainZuo2026}.

The result of Shi, Yau, and Zuo which is most closely related to the present
paper is \cite[Theorem~4.63]{ShiYauZuo2024}.  If \(f\) is semi-quasi-homogeneous
with principal part \(f_w\), they proved
\[
 \mu_k(f)=\mu_k(f_w)
 \quad\text{for}\quad
 k\leq\min_i\ord(f_{x_i}).
\]
The equality \(\mu(f)=\mu(f_w)\) for the ordinary Milnor number is classical.
For the higher numbers, however, one has to control every layer
\(\m^kJ(f)\), and the problem is therefore more rigid.  Shi, Yau, and Zuo
asked whether the above restriction on \(k\) can be removed and conjectured
that the equality holds for all \(k\)
\cite[Conjecture~4.65]{ShiYauZuo2024}.

We treat the case in which the principal part has equal weights.  After
rescaling the common weight, this is the ordinary homogeneous case.  Namely,
\begin{equation}\label{eq:ordinary-expansion-intro}
 f=f_r+f_{r+1}+f_{r+2}+\cdots,
\end{equation}
where \(f_j\) is homogeneous of degree \(j\) and
\[
 V_+(f_r)\subset\mathbb P^{n-1}
\]
is smooth.  Put
\[
 d=r-1,\qquad
 S=\C[x_1,\ldots,x_n],\qquad
 Q=J(f_r)\subset S.
\]
Then \(Q\) is a complete intersection generated by \(n\) forms of degree
\(d\).

This class is classical and sufficiently rich, but it already contains an
important difficulty:
the homogeneous tangent cone is fixed while the higher terms of the analytic
germ may vary.  It is therefore a useful class for comparing graded geometry
with analytic geometry.  Our results show, on the one hand, that the complete
higher Milnor sequence is rigid in this class and, on the other hand, that this
numerical rigidity does not give analytic classification.  This distinction is
one of the main points of the paper.

Our first result is the following.

\begin{maintheorem}\label{thm:main-a}
Let \(n\geq2\), \(r\geq2\), and let
\[
 f=f_r+f_{r+1}+f_{r+2}+\cdots
 \in\C\{x_1,\ldots,x_n\}
\]
have an isolated critical point at the origin.  Assume that each \(f_j\)
is homogeneous of degree \(j\), that \(f_r\neq0\), and that
\[
 V_+(f_r)\subset\mathbb P^{n-1}
\]
is smooth.  Then
\begin{equation}\label{eq:intro-sqh-invariance}
 \mu_k(f)=\mu_k(f_r)\qquad\text{for all }k\geq0.
\end{equation}
Consequently, the sequence depends only on \(n\) and \(r\).
\end{maintheorem}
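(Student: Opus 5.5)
The plan is to compare $\m^kJ(f)$ with $\m^kJ(f_r)$ through the $\m$-adic associated graded ring. Since $\dim_{\C}R/I=\dim_{\C}\gr_{\m}(R)/\inm(I)$ for any $\m$-primary ideal $I$, it suffices to show
\[
 \inm\bigl(\m^kJ(f)\bigr)=\m^kQ\subset S=\gr_{\m}(R)
 \qquad\text{for every }k\geq0 .
\]
Indeed, for the homogeneous germ $f_r$ the ideal $\m^kJ(f_r)$ is generated by forms, its initial ideal is $\m^kQ$, and $\mu_k(f_r)=\dim S/\m^kQ$. The equation above would then give $\mu_k(f)=\mu_k(f_r)$, and that number depends only on $n$ and $r$, because the Hilbert function of $S/\m^kQ$ for a complete intersection $Q$ of $n$ forms of degree $d$ is determined by the Koszul resolution.

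The key point is the Valabrega--Valla criterion. Let $g_i=f_{x_i}$, so $\inm(g_i)=\partial_if_r$ and all these have the same order $d$. The initial forms $\partial_1f_r,\dots,\partial_nf_r$ form a regular sequence in $S$, since $V_+(f_r)$ is smooth and so $Q$ is $\m$-primary. In the common-order case, the criterion (the initial forms of $g_1,\dots,g_n$ form a regular sequence of forms of equal degree) gives two things. First, $\inm(J(f))=Q$. Second, the intersection formula
\[
 J(f)\cap\m^{j}=\m^{j-d}J(f)\qquad(j\geq d)
\]
holds, that is, $\m^{j}\cap J(f)=\sum_i g_i\m^{j-d}$. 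I would spell this out carefully. A relation $\sum a_ig_i$ of order strictly higher than $\min(\ord a_i)+d$ forces the leading forms to give a syzygy of $\partial_if_r$. That syzygy is Koszul because the sequence is regular, and it lifts to a Koszul syzygy of the $g_i$. One then subtracts and repeats; this terminates $\m$-adically by Krull intersection / Artin--Rees.

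With this in hand, I compute $\inm(\m^kJ(f))$. The inclusion $\supseteq\m^kQ$ is clear: monomials of degree $k$ times $g_i$ have initial forms spanning $\m^kQ$. For $\subseteq$, take $h=\sum a_ig_i\in\m^kJ(f)$ with $a_i\in\m^k$ and $\ord h=j$. Then $j\geq k+d$, and $h\in J(f)\cap\m^{j}=\m^{j-d}J(f)$, so we may rewrite $h=\sum b_ig_i$ with $\ord b_i\geq j-d$. Taking degree-$j$ parts gives
\[
 \inm(h)=\sum_i \bigl(b_i\bigr)_{j-d}\,\partial_if_r\in\m^{j-d}Q\subset\m^kQ .
\]
Taking degree-$j$ parts is legitimate because every other contribution has order $>j$.

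The main obstacle is the intersection formula. It requires the standard-basis (Valabrega--Valla) lifting of syzygies to work with the unequal higher-order tails of the $g_i$, and the infinite iteration must be controlled. Using the criterion stated earlier in the paper in its common-order form should dispose of it. Failing that, I would first reduce to $f$ being a polynomial: $\mu_k$ is finitely determined, since $\m^NJ(f)\supseteq\m^{N'}$ for $N'$ large. Then I would apply the graded argument to the homogenization, where flatness of the family $t\mapsto t^{-r}f(tx)$ at $t=0$ is equivalent to the desired equality of colengths.
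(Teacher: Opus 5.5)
Your proposal is correct and is essentially the paper's argument. Both rest on the common-order Valabrega--Valla intersection formula $J\cap\m^{p+d}=\m^pJ$, obtained by lifting Koszul syzygies of the $q_i=\partial_if_r$ to Koszul syzygies of the $g_i$; you package the conclusion as $\inm(\m^kJ(f))=\m^kQ$ together with $\dim R/I=\dim S/\inm(I)$, whereas the paper records it as $\bigoplus_p\m^pJ/\m^{p+1}J\simeq Q(d)$ and sums the layers. Your lifting iteration raises $\min_i\ord(a_i)$ by one at each step, so it stops after at most $j-d$ steps, and neither Krull intersection/Artin--Rees nor the homogenization fallback is needed.
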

This proves the equal-weight case of
\cite[Conjecture~4.65]{ShiYauZuo2024}.  All partial derivatives
\(f_{x_i}\) have order \(d\), and their initial forms form a regular
sequence.  The common-order case of the Valabrega--Valla criterion gives
\[
 J(f)\cap\m^{p+d}=\m^pJ(f).
\]
A direct lift of the Koszul relations also gives
\begin{equation}\label{eq:intro-graded-isomorphism}
 \bigoplus_{p\geq0}\frac{\m^pJ(f)}{\m^{p+1}J(f)}
 \simeq Q(d).
\end{equation}
The intersection formula is a standard filtered-algebra input.  Its use
here identifies every Jacobian layer and removes the bound on \(k\) in
the equal-weight case.

Once the graded module is known, one can determine exactly when the filtered
Jacobian ideal becomes an ordinary power of \(\m\).  We obtain both a closed
formula and a sharp stabilization index.

\begin{maintheorem}
\label{thm:main-b}
Let \(n\geq2\), \(r\geq2\), and let
\[
 f=f_r+f_{r+1}+f_{r+2}+\cdots
 \in\C\{x_1,\ldots,x_n\}
\]
have an isolated critical point.  Assume that each \(f_j\) is homogeneous
of degree \(j\) and that \(V_+(f_r)\subset\mathbb P^{n-1}\) is smooth.
Set
\[
 d=r-1,\qquad K=(n-1)(d-1).
\]
We use the convention \(\binom an=0\) when \(a<n\).  For every \(k\geq0\),
\begin{equation}\label{eq:intro-closed-form}
 \mu_k(f)=d^n+\sum_{j=1}^n(-1)^{j+1}\binom nj
 \binom{n+k-1-(j-1)d}{n}.
\end{equation}
Moreover,
\begin{equation}\label{eq:intro-stabilization}
 \m^kJ(f)=\m^{k+d}\quad\Longleftrightarrow\quad k\geq K,
\end{equation}

If \(K\leq2\), then \(f\in J(f)\), \(\tau_0(f)=d^n\), and
\[
 \tau_k(f)=\mu_k(f)-\binom{n+k-2}{n}
 \qquad(k\geq1).
\]
If \(K\geq3\), the same formula holds for every \(k\geq K-1\).
This uniform cutoff is best possible.  For every smooth homogeneous
principal part \(f_r\), there is a homogeneous polynomial \(h\) of degree
\(r+1\) such that the formula fails for \(F=f_r+h\) at \(k=K-2\).

In particular, for every \(k\geq K\),
\[
 \tau_k(f)=
 \binom{n+d+k-1}{n}-\binom{n+k-2}{n}.
\]
\end{maintheorem}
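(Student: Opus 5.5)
The plan is to reduce everything to the graded complete intersection \(Q=J(f_r)\subset S\). The two inputs from the proof of \cref{thm:main-a} are the intersection formula \(J(f)\cap\m^{p+d}=\m^pJ(f)\) and the graded isomorphism \eqref{eq:intro-graded-isomorphism}. The consequence I will use repeatedly is this: an element of \(\m^pJ(f)\) of order \(e\) has initial form in \((\m^pQ)_e\), and this graded piece is \(Q_e\) if \(e\geq p+d\) and \(0\) otherwise, because \(Q\) is generated in degree \(d\).

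\emph{Closed formula.} By \cref{thm:main-a} it suffices to compute \(\dim S/\m^kQ\). Since \((\m^kQ)_t=Q_t\) for \(t\geq k+d\) and \((\m^kQ)_t=0\) for \(t<k+d\), we get
\[
 \dim S/\m^kQ=d^n+\sum_{t<k+d}\dim Q_t .
\]
The Koszul resolution of the regular sequence of \(n\) forms of degree \(d\) gives
\[
 \dim Q_t=\sum_{j\geq1}(-1)^{j+1}\binom nj\binom{t-jd+n-1}{n-1}.
\]
Summing over \(0\leq t\leq k+d-1\) with the hockey-stick identity produces \eqref{eq:intro-closed-form}.

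\emph{Stabilization.} We always have \(\m^kJ(f)\subseteq\m^{k+d}\), so equality holds iff the two colengths agree. By \cref{thm:main-a} this is the same as \(\m^kQ=\m^{k+d}\) in \(S\), that is, \(Q_t=S_t\) for all \(t\geq k+d\). Since \(S/Q\) is Gorenstein with socle degree \(n(d-1)=K+d-1\), this happens iff \(k+d>K+d-1\), i.e.\ \(k\geq K\).

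\emph{Tjurina formula.} We have
\[
 \tau_k=\mu_k-\dim(f,\m^kJ)/\m^kJ=\mu_k-\dim R/(\m^kJ:f),
\]
so the formula \(\tau_k=\mu_k-\binom{n+k-2}{n}\) is equivalent to \((\m^kJ:f)=\m^{k-1}\).

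The inclusion \(\subseteq\) holds for every \(k\). If \(gf\in\m^kJ\), then its initial form \(\inm(g)f_r\) is nonzero, since \(S\) is a domain. This form lies in \((\m^kQ)_e\) with \(e=\ord g+d+1\), so it forces \(\ord g+d+1\geq k+d\), hence \(g\in\m^{k-1}\).

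For \(\supseteq\), write \(f=f_r+g\) with \(g\in\m^{r+1}\), and use Euler's relation \(rf_r=\sum x_i\partial_if_r\) with \(\partial_if_r=f_{x_i}-(\text{order}\geq d+1)\). This gives
\[
 \m^{k-1}f\subseteq\m^kJ+\m^{k+d+1}.
\]
When \(k\geq K-1\) we have \(\m^{k+d+1}=\m^{k+1}J\subseteq\m^kJ\), and the formula follows.

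If \(K\leq2\), this already covers all \(k\geq1\). Moreover \(\m^{d+2}=\m^2J\subseteq J\), so Euler gives \(f\in J\) and \(\tau_0=\mu_0=d^n\). For \(k\geq K\), substituting \(\mu_k=\dim R/\m^{k+d}\) gives the final binomial formula.

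\emph{Sharpness (the main step).} Assume \(K\geq3\) and set \(k=K-2\). We must find \(h\) of degree \(r+1\) with \(\m^{K-3}F\not\subseteq\m^{K-2}J(F)\) for \(F=f_r+h\). Such an \(F\) has an isolated singularity because its principal part is smooth.

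\begin{enumerate}[label=(\roman*)]
\item \emph{Choice of \(h\).} The pairing \(S_{K-3}\times S_{d+2}\to(S/Q)_{K+d-1}\cong\C\) is perfect, since \(K-3+d+2=K+d-1\) is the socle degree. Pick a monomial \(u\in S_{K-3}\) and \(h\in S_{d+2}\) with \(uh\notin Q\).
\item \emph{Reduction.} Euler's relation for \(f_r\) and for \(h\) gives
\[
 uF\equiv u\Bigl(h-\tfrac1r\sum x_i\partial_ih\Bigr)=-\tfrac1r\,uh \pmod{\m^{K-2}J(F)}.
\]
\item \emph{Contradiction.} If \(uF\in\m^{K-2}J(F)\), then \(uh\in\m^{K-2}J(F)\). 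Since \(uh\) is homogeneous of degree \(K+d-1\), it equals its own initial form, so \(uh\in Q_{K+d-1}\), contradicting the choice of \(h\). Hence \(u\in\m^{K-3}\setminus(\m^{K-2}J:F)\), and the formula fails at \(k=K-2\).
\end{enumerate}

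I expect the delicate points to be the initial-form bookkeeping in (iii) and checking that \(uh\) genuinely escapes \(Q\).
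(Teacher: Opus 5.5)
Your proof is correct and follows essentially the same route as the paper: the intersection formula and $\inm J=Q$ reduce everything to the complete intersection $S/Q$ with socle degree $n(d-1)=K+d-1$, Euler's identity combined with stabilization gives $(\m^kJ:f)=\m^{k-1}$ for $k\geq K-1$, and sharpness comes from $[F]=-\frac1r[h]$ in $R/J(F)$ together with $\inm J(F)=Q$. One wording slip: the pairing $S_{K-3}\times S_{d+2}\to(S/Q)_{K+d-1}$ is not perfect (only the induced pairing on $(S/Q)_{K-3}\times(S/Q)_{d+2}$ is). This is harmless, since you only need some $u,h$ with $uh\notin Q$, and that holds because $(S/Q)_{K+d-1}\neq0$ is spanned by monomials, each of which factors as a monomial of degree $K-3$ times a monomial of degree $d+2$.
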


For a homogeneous principal part \(f_r\), the formula follows from the
Hilbert series of the complete intersection \(S/J(f_r)\).  It is also a direct specialization of the graded Jacobian
construction in \cite[Lemma~6 and Theorem~23]{HuYauZuo2025}.  Related formulas for binomial plane singularities
appear in \cite{HussainLiuYauZuo2023}.  The new point here is that
\Cref{thm:main-a} shows that the formula is unchanged after arbitrary
higher-degree terms are added.  The exact stabilization index and the
optimal uniform Tjurina cutoff are further conclusions of this paper.

We next study the shape of the numerical sequence.  Two related
Milnor-type sequences have opposite behavior.  Teissier's sectional
Milnor numbers form a log-convex sequence, while Huh's projective Milnor
numbers form a log-concave sequence
\cite{Teissier1977Minkowski,Huh2012}.  Both are different from the higher
Milnor sequence considered here.  It is therefore natural to ask whether
the numbers in \eqref{eq:def-higher-milnor} satisfy a similar property.  Put
\[
 \Delta_k=\mu_k^2-\mu_{k-1}\mu_{k+1}.
\]

\begin{maintheorem}\label{thm:main-c}
Let \(f\) be an ordinary semi-homogeneous germ in \(n\geq2\) variables.
Set
\[
 d=\ord(f)-1,\qquad K=(n-1)(d-1).
\]
If \(d=1\), the sequence \((\mu_k)\) is strictly log-concave for all
\(k\geq1\).  Let \(d\geq2\).  Then it is strictly log-convex for
\[
 1\leq k\leq d-1.
\]
If \(n=2\), it is strictly log-concave for \(k\geq d\).  If \(n\geq3\),
it is strictly log-concave for \(k\geq K\).

For an arbitrary isolated hypersurface germ in \(n\geq2\) variables,
both \((\mu_k)\) and \((\tau_k)\) are eventually strictly log-concave.
\end{maintheorem}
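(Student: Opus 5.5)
The plan is to derive everything in the semi-homogeneous case from the closed formula \eqref{eq:intro-closed-form} and the graded isomorphism \eqref{eq:intro-graded-isomorphism}. The general case will be handled by an asymptotic Hilbert-polynomial argument.

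\emph{Step 1 (first differences).} By \eqref{eq:intro-graded-isomorphism},
\[
\mu_k-\mu_{k-1}=\dim_{\C}\m^{k-1}J(f)/\m^kJ(f)=\dim_{\C}Q_{d+k-1}.
\]
The Koszul relations among the generators of \(Q\) begin in degree \(2d\). Hence for \(k\leq d\) this difference equals \(n\binom{n+k-2}{n-1}\), and so
\[
\mu_k=d^n+n\,c_k,\qquad c_k=\binom{n+k-1}{n},\qquad 0\leq k\leq d.
\]
If \(d=1\), then \(K=0\) and \(\mu_k=\binom{n+k}{n}\) for all \(k\). This is a polynomial in \(k\) with only real negative roots, so it is strictly log-concave for \(k\geq1\).

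\emph{Step 2 (initial log-convex part).} For \(d\geq2\) and \(1\leq k\leq d-1\), write \(a=d^n\) and \(b=n\). Then
\[
\Delta_k=-ab\,(c_{k+1}-2c_k+c_{k-1})+b^2\,(c_k^2-c_{k-1}c_{k+1}).
\]
The first term equals \(-ab\binom{n+k-2}{n-2}<0\). The plan is to show that it dominates, i.e.
\[
d^n\binom{n+k-2}{n-2}>n\,(c_k^2-c_{k-1}c_{k+1})\qquad(k\leq d-1).
\]
The right-hand side can be computed explicitly from the binomial ratios \(c_{k\pm1}/c_k\). It is roughly \(c_k^2/(k(n+k))\), which is at most of order \(k^{2n-2}\). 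Since \(k<d\), this is bounded above by a multiple of \(d^{n}k^{n-2}\). Getting the constants to work uniformly in \(n\) is the routine but fiddly part; I would first try the crude bound \(c_k\leq \binom{n+d-2}{n}\) together with \(k\leq d-1\).

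\emph{Step 3 (log-concave tail).} By \eqref{eq:intro-stabilization}, for \(k\geq K\) we have \(\mu_k=B_k:=\binom{n+d+k-1}{n}\), which is strictly log-concave. This settles \(k\geq K+1\).

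At \(k=K\) we compare \(\m^{K-1}J(f)\) with \(\m^{K-1+d}\) using the intersection formula:
\[
\mu_{K-1}-B_{K-1}=\dim_{\C}\,\m^{K-1+d}/\m^{K-1}J(f)=\dim_{\C}(S/Q)_{K+d-1}=1,
\]
because \(K+d-1=n(d-1)\) is the socle degree of the complete intersection \(S/Q\). So one must check the inequality \(B_K^2-B_{K-1}B_{K+1}>B_{K+1}\). Expanding the ratios, this amounts to \(B_K\) being larger than about \((K+d)^2/n\). That holds when \(n\geq3\), or when \(n=2\) and \(k\geq d\) (in which case \(K=d-1\) and all three indices lie in the stable range).

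I expect this boundary index, together with the uniform estimate in Step 2, to be the main obstacle. The gap between \(d-1\) and \(K\) when \(n\geq3\) is left unclaimed, as in the statement.

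\emph{Step 4 (arbitrary germs).} Consider the \(\m\)-filtration \(\m^kJ(f)\) of the rank-one module \(J(f)\). Its associated graded module over \(\gr_{\m}R\) is finitely generated. Hence \(\dim_{\C}\m^kJ/\m^{k+1}J\) is eventually a polynomial of degree \(n-1\) with leading coefficient \(1/(n-1)!\), and so \(\mu_k=P(k)\) for \(k\gg0\), where \(P\) has degree \(n\) and positive leading coefficient.

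Similarly, \(\tau_k\) is the colength of the \(\m\)-good filtration \(\m^kJ(f)\cdot R/(f)\) in the \((n-1)\)-dimensional ring \(R/(f)\). It is therefore eventually a polynomial \(P'\) of degree \(n-1\geq1\) with positive leading coefficient.

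For any polynomial \(P\) of degree \(m\geq1\) with positive leading coefficient \(a\), the leading term of \(P(k)^2-P(k-1)P(k+1)\) is \(m\,a^2k^{2m-2}>0\). Hence both \((\mu_k)\) and \((\tau_k)\) are eventually strictly log-concave.
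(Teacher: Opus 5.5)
Your outline is the paper's proof: the early formula $\mu_k=d^n+n c_k$ for $k\le d$, the splitting of $\Delta_k$ into a second-difference term and $c_k^2-c_{k-1}c_{k+1}$, stabilization plus the one-dimensional socle at $k=K$, and Hilbert--Samuel polynomials for arbitrary germs. Step~1, the case $d=1$, the range $k\ge K+1$, and Step~4 are fine. \textbf{The gap is that the two inequalities that decide the signs are never proved}, and the approximations you offer instead are too coarse to prove them.

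In Step~2, no estimate that loses a constant factor can work. For $n=2$, the required inequality at $k=d-1$ is $d(d-1)<d^2$, whose relative margin $1/d$ tends to $0$. Your heuristic is also off by roughly a factor $n^2$: in fact $n(c_k^2-c_{k-1}c_{k+1})=n^2c_k^2/\bigl(k(n+k-1)\bigr)$. What is missing is the exact identity
\[
 c_k^2-c_{k-1}c_{k+1}=\frac1n\binom{n+k-1}{n-1}\binom{n+k-2}{n-1}.
\]
Combined with $\binom{n+k-2}{n-1}=\frac{k}{n-1}\binom{n+k-2}{n-2}$, it gives
\[
 \Delta_k=n\binom{n+k-2}{n-2}\Bigl(\frac{n}{n-1}\binom{n+k-1}{n}-d^n\Bigr).
\]
Only after this cancellation does a crude bound suffice. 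The bracket increases with $k$, and at $k=d-1$ you use $\binom{d+n-2}{n}=\frac{d-1}{n}\binom{d+n-2}{n-1}$ together with $\binom{d+n-2}{n-1}\le d^{n-1}$. If you apply $c_k\le\binom{n+d-2}{n}$ before the cancellation, the bound fails at small $k$ because of the denominator $k(n+k-1)$.

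The boundary index $k=K$ has the same problem in a sharper form. Put $m=n+d+k-1$. Then exactly
\[
 B_k^2-B_{k-1}B_{k+1}=\frac{n}{m(m+1-n)}B_k^2,\qquad B_{k+1}=\frac{m+1}{m+1-n}B_k,
\]
and $m=nd$ at $k=K$. Hence
\[
 \Delta_K>0\iff nB_K>m(m+1)\iff\binom{nd}{n}>d(nd+1).
\]
Your threshold $(K+d)^2/n$ replaces $m(m+1)$ by $(m-n+1)^2$, and this is not harmless. For $n=2$ and $d\ge2$ it gives $\binom{2d}{2}=d(2d-1)>(2d-1)^2/2$, so it predicts $\Delta_{d-1}>0$. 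In fact $d(2d-1)<d(2d+1)$, so $\Delta_{d-1}<0$, as the log-convex range requires. Your claim ``that holds when $n\ge3$'' is therefore unsupported at exactly the index where the sign is delicate.

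The repair is short. Write $\binom{nd}{n}=d\binom{nd-1}{n-1}$. For $n\ge3$ and $d\ge2$ one has $2\le n-1\le(nd-1)/2$, so $\binom{nd-1}{n-1}\ge\binom{nd-1}{2}>nd+1$. The last inequality is equivalent to $nd>5$, which holds.
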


The last assertion is a consequence of Hilbert--Samuel theory: the sequences
are eventually polynomial, and their leading terms force strict log-concavity.
We emphasize that this is only an eventual statement.  The
first layers need not be log-concave; for the ordinary germs considered above,
they can even form a strictly log-convex initial part.  Thus the passage from the
Jacobian layers to the Hilbert--Samuel tail is visible in the change of sign of
\(\Delta_k\).

For plane curve singularities, a recent formula of Fern\'andez-P\'erez
and Ribeiro makes this transition completely explicit
\cite[Corollary~4.1]{FernandezPerezRibeiro2025}:
\begin{equation}\label{eq:intro-plane-formula}
 \mu_k=
 \begin{cases}
  \mu+k(k+1),&0\leq k<r,\\[2pt]
  \displaystyle\mu+k(k+1)-\binom{k-r+2}{2},&k\geq r,
 \end{cases}
\end{equation}
where \(r=\ord(f)\) and \(\mu=\mu_0(f)\).  As an application of this
formula, we obtain a complete description of the change from
log-convexity to log-concavity.

\begin{maintheorem}\label{thm:main-d}
For an isolated plane curve singularity, the defects
\(\Delta_k=\mu_k^2-\mu_{k-1}\mu_{k+1}\) change sign at most once, from
negative to nonnegative.  The first nonnegative index is given explicitly
in \Cref{prop:plane-defect}.

The full sequence is log-concave at every positive index if and only if
\[
 r=2\quad\text{and}\quad\mu\leq4.
\]
It is strictly log-concave at every positive index if and only if
\[
 r=2\quad\text{and}\quad\mu\leq3.
\]
The sequence determines the pair \((\mu,r)\), but it does not determine
the contact-equivalence class.
\end{maintheorem}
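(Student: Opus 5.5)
The plan is to work directly from the Fern\'andez-P\'erez--Ribeiro formula \eqref{eq:intro-plane-formula} and compute \(\Delta_k\) in closed form in each regime. The regimes are: the \emph{pure range} \(k+1<r\), the \emph{boundary index} \(k=r-1\), and the \emph{tail} \(k\geq r\).

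\textbf{Pure range.} If \(k+1\leq r-1\), then \(\mu_{k\pm1},\mu_k\) are all given by \(\mu+k(k+1)\). Put \(a=\mu+k^2\), so that \(\mu_k=a+k\), \(\mu_{k-1}=a-k\) and \(\mu_{k+1}=a+3k+2\). A one-line expansion gives
\[
 \Delta_k=2(k^2+k-\mu).
\]
For a plane curve of order \(r\) one has \(\mu\geq(r-1)^2\). Hence, for \(1\leq k\leq r-2\),
\[
 k(k+1)\leq(r-2)(r-1)<(r-1)^2\leq\mu,
\]
so \(\Delta_k<0\) throughout this range.

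\textbf{Tail.} For \(k\geq r\) all three terms lie in the second branch. There \(\mu_k=p(k)\) with
\[
 p(k)=Ak^2+Bk+C,\qquad A=\tfrac12,\quad B=r-\tfrac12,\quad C=\mu-\tfrac{(r-1)(r-2)}2 .
\]
Writing \(p(k\pm1)=p\pm p'+A\) gives the general identity \(\Delta_k=p'^2-2Ap-A^2\), that is,
\[
 \Delta_k=\tfrac12k^2+\tfrac12Bk+B^2-C-\tfrac14 .
\]
Since \(B>0\), this is strictly increasing for \(k\geq0\).

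\textbf{Boundary index.} At \(k=r-1\) I will compute \(\Delta_{r-1}\) directly. This uses \(\mu_{r-2}\) and \(\mu_{r-1}\) from the first branch and \(\mu_r=\mu+r(r+1)-1\). I will then check two inequalities: \(\Delta_{r-2}\leq\Delta_{r-1}\) (when \(r\geq3\)) and \(\Delta_{r-1}\leq\Delta_r\). Together with the monotonicity in the two regimes, this shows that \(k\mapsto\Delta_k\) is nondecreasing on \(k\geq1\). Hence the sign changes at most once, from negative to nonnegative. The first nonnegative index is obtained by solving the relevant quadratic inequality in \(k\), which is what \Cref{prop:plane-defect} records. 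I expect the main obstacle to be these two boundary comparisons. Both differences are explicit polynomials in \(r\) and \(\mu\), and I would try to show they are nonnegative using only \(\mu\geq(r-1)^2\). If monotonicity across \(k=r-1\) failed, the fallback is a direct argument: if \(\Delta_{r-1}\geq0\) then \(\Delta_r\geq0\), and if \(\Delta_{r-1}<0\) then no earlier defect is nonnegative.

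\textbf{Log-concavity at every positive index.} By monotonicity this is equivalent to \(\Delta_1\geq0\). If \(r\geq3\), then \(\Delta_1=2(2-\mu)<0\), because \(\mu\geq4\). If \(r=2\), then \(\mu_1=\mu+2\) and \(\mu_2=\mu+5\), so
\[
 \Delta_1=(\mu+2)^2-\mu(\mu+5)=4-\mu .
\]
This gives log-concavity iff \(\mu\leq4\) and strict log-concavity iff \(\mu\leq3\). For strictness in the case \(r=2\), \(\mu\leq3\), I also need \(\Delta_k>0\) for all \(k\geq2\); this follows from \(\Delta_1>0\) and the monotonicity above.

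\textbf{What the sequence determines.} It recovers \(\mu=\mu_0\), and it recovers \(r\) as the least \(k\) with \(\mu_k\neq\mu+k(k+1)\). To show it does not determine the contact class, take \(E_6\colon x^3+y^4\) and \(D_6\colon x^2y+y^5\). Both have \(\mu=6\) and \(r=3\), so by \eqref{eq:intro-plane-formula} they have identical sequences. They are not contact equivalent, by the simple-singularity classification; for instance, their tangent cones differ, one being a triple line and the other a double line plus a simple line.
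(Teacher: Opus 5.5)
Your proposal is correct and follows essentially the paper's route. You substitute the Fern\'andez-P\'erez--Ribeiro formula, get $\Delta_k=2(k(k+1)-\mu)<0$ for $1\le k\le r-2$ from $\mu\ge(r-1)^2$, show the defect is a strictly increasing quadratic in the tail, compute $\Delta_1=4-\mu$ when $r=2$, and use the pair $x^2y+y^5$, $x^3+y^4$, which are separated by their tangent cones; recovering $r$ as the first $k$ with $\mu_k\neq\mu+k(k+1)$ is an equivalent, slightly simpler substitute for the paper's second differences.

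Two small corrections:
\begin{itemize}
\item The boundary index $k=r-1$ is not an obstacle. The term $\binom{k-r+2}{2}$ vanishes at $k=r-2$ and $k=r-1$, so your quadratic $p$ already gives $\mu_{r-2},\mu_{r-1},\mu_r$, and your tail identity covers $\Delta_{r-1}$. This is why the paper's second range starts at $k=r-1$. Checking directly, $\Delta_{r-1}-\Delta_{r-2}=(r-1)(r+2)+\mu>0$ and $\Delta_r-\Delta_{r-1}=2r-1>0$.
\item The linear coefficient in your tail defect should be $B$, not $\tfrac12B$. The correct expression is $\Delta_k=\tfrac12\bigl[k^2+(2r-1)k+(3r-2)(r-1)-2\mu\bigr]$. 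This does not affect monotonicity, but it does affect the explicit first nonnegative index.
\end{itemize}
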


Formula \eqref{eq:intro-plane-formula} shows that
the higher-Milnor claims in Conjecture~1.10 in
\cite{HussainZuo2026} doesn't hold.  A separate computation for the \(E_6\)
singularity gives a counterexample to the higher-Tjurina claims in the Conjecture~1.10.

Let us say a few words about the proofs.  Since the projective hypersurface
\(V_+(f_r)\) is smooth, the partial derivatives of
\(f_r\) form a regular sequence.  The common-order case of the
Valabrega--Valla criterion gives the intersection formula, while a direct
lift of the Koszul relations identifies the graded Jacobian module.
The formula for \(\mu_k\) follows
from the Hilbert series
\[
 H_{S/Q}(t)=(1+t+\cdots+t^{d-1})^n.
\]
The stabilization index is the first integer \(k\) for which \(d+k\) is
larger than the socle degree \(s=n(d-1)\) of \(S/Q\).  The sharp Tjurina
range follows from its Gorenstein duality.  The
plane case is obtained by a direct calculation from
\eqref{eq:intro-plane-formula}, while eventual log-concavity in the general case
comes from the Hilbert--Samuel polynomials.

Finally, we discuss the amount of geometric information contained in the
sequence \((\mu_k)\).  At first sight, \Cref{thm:main-a} may suggest a strong
classification statement.  However, the sequence records only the dimensions
of the graded pieces and forgets their multiplication.  We give families with
different tangent cones, or with different higher-order deformation directions,
but with the same higher Milnor sequence.  Consequently, the rigidity proved
here should be regarded as a numerical rigidity statement rather than a
Torelli-type result.  This also indicates why
the higher algebras may contain substantially more information than their
dimensions.

The paper is organized as follows.  After the preliminary discussion in
\Cref{sec:preliminaries}, we begin in \Cref{sec:layers} with the
Jacobian layers and their Hilbert--Samuel interpretation.  In
\Cref{sec:strict-jacobian}, we study the graded Jacobian module of an
ordinary semi-homogeneous singularity and prove \Cref{thm:main-a}.  We then
derive the formulas and the sharp stabilization results in
\Cref{sec:formula-stabilization}.  The next two sections are devoted to
log-concavity.  We first discuss the general log-convex and log-concave
phases in \Cref{sec:log-phases}, and then give a more precise description
for plane curve singularities in \Cref{sec:plane}.  Finally,
\Cref{sec:geometry-loss} gives examples showing that the same numerical
sequence may come from different geometric situations.

\section{Preliminaries and motivation}\label{sec:preliminaries}

This section fixes the local and graded notation used in the paper.  It also
places the higher Milnor and Tjurina numbers in the setting of filtered
modules and Hilbert--Samuel functions.

\subsection{The local ring and the classical invariants}

We work in the local ring
\[
 R=\mathcal O_n=\C\{x_1,\ldots,x_n\}.
\]
Its maximal ideal is
\[
 \m=(x_1,\ldots,x_n).
\]
The ideal \(\m^k\) consists of germs whose terms have degree at least \(k\).
Thus \(R/\m^k\) records Taylor expansions up to degree \(k-1\).  If an
\(R\)-module \(N\) has finite length, then
\[
 \length_R(N)=\dim_{\C}N.
\]
For a nonzero germ \(h=\sum_{j\geq0}h_j\), with each \(h_j\) homogeneous of
degree \(j\), its order is
\[
 \ord(h)=\min\{j:h_j\neq0\}.
\]
Thus \(h\in\m^k\) exactly when \(\ord(h)\geq k\).

Let \(f\in\m^2\).  Its Jacobian ideal is
\[
 J(f)=(f_{x_1},\ldots,f_{x_n}).
\]
The origin is an isolated critical point exactly when \(J(f)\) is
\(\m\)-primary.  In this case, the Milnor algebra
\[
 M(f)=R/J(f)
\]
has finite dimension.  Its dimension is the Milnor number \(\mu(f)\).
The Milnor number is also the middle Betti number of the Milnor fiber
\cite{Milnor1968}.

The Tjurina algebra is
\[
 T(f)=R/(f,J(f)).
\]
Its dimension is the Tjurina number \(\tau(f)\).  This algebra describes
first-order deformations of the hypersurface germ; see
\cite{GreuelLossenShustin2007}.  The two classical numbers have different
roles.  The Milnor number has a topological meaning.  The Tjurina number is
tied more closely to analytic deformation.

Higher Tjurina algebras also occur in higher Mather--Yau questions and in
deformation problems with a fixed infinitesimal neighborhood
\cite{GreuelPham2017,HuYauZuo2025}.

\subsection{Higher quotients and the layer dictionary}\label{sec:layers}

Recall from \eqref{eq:def-higher-milnor} that
\[
 \mu_k(f)=\length_R\frac{R}{\m^kJ(f)},\qquad
 \tau_k(f)=\length_R\frac{R}{(f,\m^kJ(f))}.
\]
Since \(J(f)\) is \(\m\)-primary, these quotients have finite length.

For \(k\geq0\), the ideal \(\m^kJ(f)\) consists of sums
\[
 \sum_{i=1}^n a_i f_{x_i},\qquad a_i\in\m^k.
\]
The index \(k\) records the order to which the coefficients vanish.  The
inclusions
\[
 \m^{k+1}J(f)\subseteq\m^kJ(f)
\]
show that \((\mu_k)\) and \((\tau_k)\) are nondecreasing.  Their zeroth
terms are the usual Milnor and Tjurina numbers.

The Milnor layer formulas below are also used in
\cite[Section~3.2]{HuYauZuo2025}.

\begin{proposition}\label{prop:layer-dictionary}
Let \(J=J(f)\).  For every \(k\geq0\),
\begin{align}
 \mu_k&=\mu_0+\length_R(J/\m^kJ),\label{eq:mu-dictionary}\\
 \mu_{k+1}-\mu_k
 &=\length_R(\m^kJ/\m^{k+1}J).\label{eq:mu-increment}
\end{align}
Set \(A=R/(f)\), let \(\bar\m=\m A\), and let
\(\bar J=(J+(f))/(f)\subset A\).  Then
\begin{align}
 \tau_k&=\tau_0+\length_A(\bar J/\bar\m^k\bar J),
 \label{eq:tau-dictionary}\\
 \tau_{k+1}-\tau_k
 &=\length_A(\bar\m^k\bar J/\bar\m^{k+1}\bar J).
 \label{eq:tau-increment}
\end{align}
\end{proposition}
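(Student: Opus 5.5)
The four identities all come from additivity of length along short exact sequences of finite-length modules, so the plan is a direct bookkeeping argument. For the Milnor formulas, I use the chain of ideals
\[
 \m^{k+1}J\subseteq \m^kJ\subseteq J\subseteq R.
\]
Since \(J\) is \(\m\)-primary, \(\m^kJ\supseteq \m^{k+c}\) for some \(c\). Hence every quotient in the chain has finite length.

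The exact sequence
\[
 0\to J/\m^kJ\to R/\m^kJ\to R/J\to0
\]
gives \(\mu_k=\length_R(J/\m^kJ)+\mu_0\), which is \eqref{eq:mu-dictionary}. Similarly,
\[
 0\to \m^kJ/\m^{k+1}J\to R/\m^{k+1}J\to R/\m^kJ\to 0
\]
gives \eqref{eq:mu-increment}.

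For the Tjurina formulas, I pass to \(A=R/(f)\). The key identification is
\[
 R/(f,\m^kJ)\simeq A/\bar\m^k\bar J.
\]
This holds because the image of \(\m^kJ\) in \(A\) is \(\bar\m^k\bar J\), and the preimage of \(\bar\m^k\bar J\) in \(R\) is \(\m^kJ+(f)\). In particular, \(\tau_k=\length_A(A/\bar\m^k\bar J)\), and \(\tau_0=\length_A(A/\bar J)\).

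Also, \(R\)-length and \(A\)-length agree on \(A\)-modules. Both residue fields are \(\C\), so both lengths equal \(\dim_\C\). The same two exact sequences, now written in \(A\) for the chain
\[
 \bar\m^{k+1}\bar J\subseteq\bar\m^k\bar J\subseteq\bar J\subseteq A,
\]
then give \eqref{eq:tau-dictionary} and \eqref{eq:tau-increment}. Finiteness holds because \(\bar J\) is \(\bar\m\)-primary in \(A\).

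There is no real obstacle here. The only point needing care is the identification of \((f,\m^kJ)/(f)\) with \(\bar\m^k\bar J\). I would check this directly from \(\bar\m^k\bar J=(\m^kJ+(f))/(f)\), which follows because products of images are images of products.
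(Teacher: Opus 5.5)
Your proposal is correct and follows the paper's proof: the exact sequence \(0\to J/\m^kJ\to R/\m^kJ\to R/J\to0\), the increments (which the paper obtains by differencing consecutive \(k\) rather than from your second exact sequence), and the same argument repeated in \(A\). You also spell out two points the paper leaves implicit: the finiteness of all the lengths, and the identification \(R/(f,\m^kJ)\simeq A/\bar\m^k\bar J\).
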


\begin{proof}
The first identity follows from the exact sequence
\[
 0\longrightarrow J/\m^kJ\longrightarrow R/\m^kJ
 \longrightarrow R/J\longrightarrow0.
\]
Taking the difference for consecutive values of \(k\) gives
\eqref{eq:mu-increment}.  The same argument in \(A\) gives
\eqref{eq:tau-dictionary} and \eqref{eq:tau-increment}.
\end{proof}

Let \(M\) be a finitely generated module over a local ring with maximal
ideal \(\mathfrak n\).  Its associated graded module is
\[
 \gr_{\mathfrak n}(M)
 =\bigoplus_{p\geq0}\frac{\mathfrak n^pM}{\mathfrak n^{p+1}M}.
\]
The Hilbert function of this graded module is
\[
 h_M(p)=\length\frac{\mathfrak n^pM}{\mathfrak n^{p+1}M}.
\]
Thus \eqref{eq:mu-increment} says that
\(\mu_{k+1}-\mu_k=h_J(k)\).  Equation \eqref{eq:tau-increment} gives the
same interpretation for \(\tau_{k+1}-\tau_k\), using \(\bar J\) over
\(A\).  The higher sequences are therefore cumulative sums of the Hilbert
functions of these two filtered modules.

\subsection{Associated graded objects and the two filtrations}

The layer dictionary uses the coefficient filtration
\[
 F_pJ=\m^pJ.
\]
The inclusion \(J\subset R\) gives a second filtration,
\[
 G_qJ=J\cap\m^q.
\]
The first filtration gives the differences of the higher Milnor numbers.
The second gives the form ideal of \(J\).

The associated graded ring of \(R\) is
\[
 \gr_{\m}(R)\simeq S=\C[x_1,\ldots,x_n].
\]
The initial form of a nonzero germ is its lowest homogeneous part.  The
initial forms of all elements of an ideal \(I\) generate its form ideal
\(\inm(I)\subset S\).  Under the displayed identification,
\[
 \inm(I)_q\simeq\frac{I\cap\m^q}{I\cap\m^{q+1}}.
\]
There is also a natural graded isomorphism
\[
 \gr_{\m}(R/I)\simeq S/\inm(I).
\]
This quotient formula always holds.  It becomes effective once the form
ideal is known, because graded ideals can be studied by Hilbert series and
free resolutions.

\subsection{The tangent cone and the ordinary case}

Let \(r=\ord(f)\), and write
\[
 f=f_r+f_{r+1}+f_{r+2}+\cdots,
\]
where each \(f_j\) is homogeneous of degree \(j\).  The polynomial \(f_r\)
is the lowest homogeneous part of \(f\).  The hypersurface
\(V(f_r)\subset\C^n\) is the tangent cone of \(V(f)\) at the origin.  Its
projectivization
\[
 X=V_+(f_r)\subset\mathbb P^{n-1}
\]
records the nonzero tangent directions.

\begin{definition}\label{def:ordinary}
The germ \(f\) is \emph{ordinary semi-homogeneous of multiplicity \(r\)} if
it has an isolated critical point and \(X\) is smooth.  We set
\[
 d=r-1,\qquad g_i=f_{x_i},\qquad q_i=(f_r)_{x_i},
\]
and write
\[
 J=(g_1,\ldots,g_n)\subset R,\qquad
 Q=(q_1,\ldots,q_n)\subset S.
\]
\end{definition}

Euler's identity shows that a common projective zero of
\(q_1,\ldots,q_n\) would be a singular point of \(X\).  Hence these forms
have no common projective zero.  The ideal \(Q\) is primary to
\((x_1,\ldots,x_n)\), and the \(q_i\) form a regular sequence.  Equivalently,
their Koszul complex is exact.  Thus \(S/Q\) is an Artinian complete
intersection.  Its Hilbert series depends only on \(n\) and \(d\); see
\eqref{eq:jacobian-ring-Hilbert-series}.

Each \(g_i\) has order \(d\) and initial form \(q_i\).  The main comparison
needed below is
\[
 J\cap\m^{p+d}=\m^pJ\qquad(p\geq0).
\]
The common-order case of the Valabrega--Valla criterion gives this
equality.  The next section also gives a direct proof by lifting the
Koszul relations.  The direct proof identifies the coefficient-graded
module of \(J\) with a shift of \(Q\).

\subsection{Hilbert--Samuel growth and log-concavity}

Let \((B,\mathfrak n)\) be a Noetherian local ring, and let \(M\) be a
finitely generated \(B\)-module.  Its Hilbert--Samuel function is
\[
 H_M(k)=\length_B(M/\mathfrak n^kM).
\]
It is the cumulative sum of the layer function:
\[
 H_M(k)=\sum_{p=0}^{k-1}
 \length_B\frac{\mathfrak n^pM}{\mathfrak n^{p+1}M}.
\]
If \(M\) has dimension \(e\), then \(H_M(k)\) agrees with a polynomial of
degree \(e\) for all large \(k\), and
\[
 H_M(k)=\frac{e_{\mathfrak n}(M)}{e!}k^e+O(k^{e-1}).
\]
Here \(e_{\mathfrak n}(M)\) is the Hilbert--Samuel multiplicity; see
\cite{Matsumura1986,BrunsHerzog1993}.

In the notation of \Cref{prop:layer-dictionary},
\[
 \mu_k-\mu_0=H_J(k),\qquad
 \tau_k-\tau_0=H_{\bar J}(k),
\]
where the second function is taken over \(A=R/(f)\).  The module \(J\) has
dimension \(n\).  Since \(A/\bar J\) has finite length, \(\bar J\) has
dimension \(n-1\).  This gives the different polynomial degrees of the two
sequences and explains their eventual behavior.

For a positive sequence \((a_k)\), log-concavity at \(k\) means
\[
 a_k^2\geq a_{k-1}a_{k+1}.
\]
Log-convexity uses the reverse inequality.  For a positive sequence,
log-concavity is equivalent to the ratios \(a_{k+1}/a_k\) being
nonincreasing.  If \(P\) has positive degree and positive leading
coefficient, then \((P(k))\) is strictly log-concave for all large \(k\).
Hilbert--Samuel theory therefore controls the tail of the sequences.  The
early and middle terms depend on the individual Jacobian layers.

The layer dictionary gives the local meaning of the numerical sequences.
The ordinary condition makes the layers accessible through the tangent
cone, while Hilbert--Samuel theory describes their polynomial tail.  The
later sections use these links to study rigidity, stabilization, and the
change from the first layers to the tail.  They also compare the numerical
sequence with the geometric data of the singularity.

\section{The graded Jacobian module of an ordinary multiple point}
\label{sec:strict-jacobian}

We keep the notation of \Cref{def:ordinary}.

The forms $q_1,\ldots,q_n$ have degree $d$.
They have no common projective zero.
Thus $Q$ is $(x_1,\ldots,x_n)$-primary, and the $q_i$ form a regular sequence in $S$.

\begin{proposition}
\label{prop:strict-jacobian}
Let $f$ be as in \Cref{def:ordinary}.
Give $J$ the coefficient filtration $F_pJ=\m^pJ$.
There is a natural graded $S$-module isomorphism
\begin{equation}\label{eq:strict-main-isom}
 \mathcal G(J):=\bigoplus_{p\geq0}\frac{\m^pJ}{\m^{p+1}J}
 \simeq Q(d),\qquad Q(d)_p=Q_{p+d}.
\end{equation}
For every $p\geq0$, one also has
\begin{equation}\label{eq:strict-intersection}
 J\cap\m^{p+d}=\m^pJ.
\end{equation}
In particular,
\begin{equation}\label{eq:form-milnor-algebra}
 \gr_{\m}(R/J)\simeq S/Q.
\end{equation}
\end{proposition}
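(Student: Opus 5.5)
The plan is to prove the intersection formula \eqref{eq:strict-intersection} first, by lifting Koszul relations, and then to read off both graded statements from it. The inclusion \(\m^pJ\subseteq J\cap\m^{p+d}\) is clear because every \(g_i\) has order \(d\). For the reverse inclusion, take \(h\in J\cap\m^{p+d}\) and write \(h=\sum a_ig_i\). Among all such representations, choose one maximizing \(m=\min_i\ord(a_i)\); this maximum exists or is infinite, but it is at most \(p\) unless we are already done. Suppose \(m<p\). Let \(\alpha_i\) be the degree-\(m\) homogeneous part of \(a_i\). The degree-\((m+d)\) part of \(h\) is \(\sum\alpha_iq_i\), and it vanishes because \(m+d<p+d\). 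Since \(q_1,\ldots,q_n\) is a regular sequence, the Koszul complex is exact, so \((\alpha_i)\) is a homogeneous Koszul syzygy:
\[
 \alpha_i=\sum_j\beta_{ij}q_j,\qquad \beta_{ij}=-\beta_{ji},\qquad \deg\beta_{ij}=m-d.
\]
(If \(m<d\), this already forces \(\alpha=0\).) Now lift it to the analytic syzygy \(\sum_{i,j}\beta_{ij}g_jg_i=0\) of \(J\) and replace \(a_i\) by \(a_i-\sum_j\beta_{ij}g_j\). The degree-\(m\) part of \(a_i\) cancels, because \(g_j=q_j+(\text{order}\geq d+1)\) and the correction terms have order at least \(m+1\). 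This raises \(m\) and contradicts maximality.

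To make this rigorous in \(\C\{x\}\), I will avoid an infinite iteration, which would require a convergence argument, by using Krull's intersection theorem or Artin--Rees. Concretely, the process shows \(J\cap\m^{p+d}\subseteq\m^pJ+(J\cap\m^{N})\) for every \(N\). Since \(J\) is \(\m\)-primary, \(\m^N\subseteq\m^{p}J\) for \(N\) large (as \(\m^{c}\subseteq J\) for some \(c\)), which closes the argument. Equivalently, I can do the induction on \(m\) only up to \(m=p\), since each step is finite. The one subtle point, and the main obstacle, is to check that the corrected coefficients keep the right orders. Alternatively, I can simply cite the common-order case of Valabrega--Valla: the initial forms \(q_i\) form a regular sequence of common degree \(d\), and this gives \(\inm(J)=Q\) together with \(J\cap\m^{p+d}=\m^pJ\).

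Given \eqref{eq:strict-intersection}, define \(\m^pJ\to Q_{p+d}\) by \(h\mapsto\) the degree-\((p+d)\) part of \(h\). It is well defined and \(S\)-linear with respect to multiplication by forms. It is surjective, since \(\sum\alpha_iq_i\) is the image of \(\sum\alpha_ig_i\) for \(\alpha_i\) homogeneous of degree \(p\). Its kernel is \(\m^pJ\cap\m^{p+d+1}=J\cap\m^{p+d+1}=\m^{p+1}J\), again by \eqref{eq:strict-intersection}. This gives \eqref{eq:strict-main-isom}, and naturality is clear.

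Finally, \eqref{eq:strict-intersection} gives \(J\cap\m^q=\m^{q-d}J\) for \(q\geq d\) and \(J\cap\m^q=J\) for \(q\leq d\). By the displayed description \(\inm(J)_q\simeq (J\cap\m^q)/(J\cap\m^{q+1})\), the computation above yields \(\inm(J)_q=Q_q\) for all \(q\): it is zero for \(q<d\) and equals \(Q_q\) for \(q\geq d\). Hence \(\inm(J)=Q\), and the quotient formula \(\gr_\m(R/I)\simeq S/\inm(I)\) gives \eqref{eq:form-milnor-algebra}.
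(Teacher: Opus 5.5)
Your proof is correct. It uses the same key lemma as the paper: exactness of the Koszul complex of \(q_1,\ldots,q_n\), together with the fact that each relation \(q_je_i-q_ie_j\) lifts to the exact relation \(g_je_i-g_ie_j\) with the same leading vector. However, you run the deduction in the opposite direction.

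The paper first takes the surjection \(S^n\to\mathcal G(J)\) and identifies its kernel with the initial syzygy module \(\inm Z(g)\). It proves \(\inm Z(g)=Z(q)\) by lifting, and so gets \(\mathcal G(J)\simeq S^n/Z(q)\simeq Q(d)\). The intersection formula is then extracted from injectivity as the one-step identity \(\m^pJ\cap\m^{p+d+1}=\m^{p+1}J\), followed by induction on \(p\).

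You instead prove \(J\cap\m^{p+d}=\m^pJ\) directly, by an order-raising (standard-basis style) reduction of a representation \(h=\sum a_ig_i\). The graded isomorphism and \(\inm(J)=Q\) then follow formally by taking degree-\((p+d)\) forms. Your reduction is really the unwound form of the inclusion \(Z(q)\subseteq\inm Z(g)\): each step uses one lifted Koszul relation to kill the current leading coefficient vector. The paper's order states the equality of initial syzygy modules explicitly as a structural fact. Yours reaches the full intersection formula in one step and makes the form-ideal identification immediate.

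One small point: the Krull/Artin--Rees detour is unnecessary. The inclusion \(J\cap\m^{p+d}\subseteq\m^pJ+(J\cap\m^N)\) you state is also weaker than what your argument actually gives. For \(h\neq0\), every representation has \(\min_i\ord(a_i)\leq\ord(h)-d\), so the maximum exists. If that maximum were less than \(p\), a single Koszul correction would raise it, a contradiction. So the argument is finite and no convergence issue arises.
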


\begin{proof}
The initial-syzygy presentation used below also appears in
\cite[Lemma~9]{HuYauZuo2025}.
We give the argument in the present analytic setting and then
use the regular sequence $q_1,\ldots,q_n$ to identify
the initial syzygy module.
Let $Z(g)\subset R^n$ be the module of first syzygies among $g_1,\ldots,g_n$.
Let $Z(q)\subset S^n$ be the corresponding homogeneous syzygy module for $q_1,\ldots,q_n$.

There is a graded surjection
\begin{equation}\label{eq:surjection-to-GJ}
 S^n\longrightarrow\mathcal G(J).
\end{equation}
In degree $p$, it sends a homogeneous vector $(\bar a_1,\ldots,\bar a_n)$ to the class of
$\sum a_i g_i$ in $\m^pJ/\m^{p+1}J$.
Here each $a_i\in\m^p$ lifts $\bar a_i$.
The kernel is the coefficientwise initial module $\inm Z(g)$.
Indeed, if the displayed class is zero, then
\[
 \sum_i a_i g_i=\sum_i b_i g_i
\]
for some $b_i\in\m^{p+1}$.
The vector $(a_i-b_i)$ is a syzygy, and its leading vector is $(\bar a_i)$.
The converse is immediate.

All $g_i$ have order $d$, with initial form $q_i$.
The leading vector of a syzygy among the $g_i$ is therefore a syzygy among the $q_i$.
This gives
\begin{equation}\label{eq:one-inclusion-syz}
 \inm Z(g)\subseteq Z(q).
\end{equation}
For the reverse inclusion, use the regular sequence $q_1,\ldots,q_n$.
Its Koszul complex is exact.
Hence every homogeneous element of $Z(q)$ is a homogeneous sum of the relations
\[
 q_j e_i-q_i e_j.
\]
Replace $q_i$ by $g_i$ in the same sum.
The result is an exact Koszul relation among the $g_i$.
Its leading vector is the given relation among the $q_i$.
Thus
\[
 \inm Z(g)=Z(q).
\]
Taking the quotient in \eqref{eq:surjection-to-GJ} gives
\[
 \mathcal G(J)\simeq S^n/Z(q)\simeq Q(d),
\]
which proves \eqref{eq:strict-main-isom}.

Under this isomorphism, the degree-$p$ class of an element of $\m^pJ$ is its homogeneous form of degree $p+d$.
Injectivity gives
\begin{equation}\label{eq:one-step-intersection}
 \m^pJ\cap\m^{p+d+1}=\m^{p+1}J.
\end{equation}
We now prove \eqref{eq:strict-intersection} by induction on $p$.
For $p=0$, it follows from $J\subset\m^d$.
If it holds for $p$, then \eqref{eq:one-step-intersection} gives
\[
 J\cap\m^{p+d+1}
 =(J\cap\m^{p+d})\cap\m^{p+d+1}
 =\m^pJ\cap\m^{p+d+1}
 =\m^{p+1}J.
\]
This proves the induction.
The form ideal of $J$ is now $Q$, so \eqref{eq:form-milnor-algebra} follows.
\end{proof}

\begin{remark}\label{rem:filtration-warning}
Apply \cite[Corollary~2.7]{ValabregaValla1978} with filtration ideal
\(\mathfrak a=\m\) and \(\ord_{\m}(g_i)=d\).  Since the initial forms
\(q_1,\ldots,q_n\) form a regular sequence, the criterion gives
\[
 J\cap\m^a=\sum_i\m^{a-d}g_i
 \qquad(a\geq d).
\]
Taking \(a=p+d\) gives
\[
 J\cap\m^{p+d}=\m^pJ.
\]
The direct proof above also records the coefficient-graded module
isomorphism used later.
\end{remark}

\begin{proof}[Proof of \Cref{thm:main-a}]
The complete intersection $Q$ is generated by $n$ forms of degree $d$.
By \eqref{eq:form-milnor-algebra},
\[
 \mu_0(f)=\length_R(R/J)=\dim_{\C}(S/Q)=d^n.
\]
By \eqref{eq:strict-main-isom},
\[
 \length_R(J/\m^kJ)=\sum_{p=0}^{k-1}\dim_{\C}Q_{p+d}.
\]
The same two expressions hold for the homogeneous germ $f_r$.
Now use \eqref{eq:mu-dictionary}.
\end{proof}

Shi, Yau, and Zuo prove in their Theorem~4.63 that
\(\mu_k(f)=\mu_k(f_w)\) for
\[
 k\leq\min_i\ord(f_{x_i})
\]
for a semi-quasi-homogeneous series with principal part $f_w$.
Their Conjecture~4.65 asks whether equality holds for every
$k\geq0$ \cite{ShiYauZuo2024}.
The weights in \Cref{thm:main-a} are $(1,\ldots,1)$.
Thus \Cref{thm:main-a} proves the equal-weight case of their conjecture.
The proof uses the common ordinary order of all partial derivatives.
This common order is absent for general weights.

\begin{corollary}\label{cor:resolution-regularity}
The minimal graded resolution of $\mathcal G(J)$ is the truncated Koszul complex
\begin{equation}\label{eq:truncated-koszul-resolution}
 \begin{aligned}
 0\longrightarrow S(-(n-1)d)&\longrightarrow\cdots
 \longrightarrow S(-2d)^{\binom n3}\\
 &\longrightarrow S(-d)^{\binom n2}
 \longrightarrow S^n\longrightarrow\mathcal G(J)\longrightarrow0.
 \end{aligned}
\end{equation}
Consequently,
\begin{equation}\label{eq:regularity-GJ}
 \reg_S\mathcal G(J)=(n-1)(d-1).
\end{equation}
\end{corollary}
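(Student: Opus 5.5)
The plan is to start from \Cref{prop:strict-jacobian}, which gives \(\mathcal G(J)\simeq Q(d)\) as graded \(S\)-modules. It therefore suffices to resolve \(Q(d)\) minimally and read off its regularity. The forms \(q_1,\ldots,q_n\) are a regular sequence of degree-\(d\) forms, so the Koszul complex \(K_\bullet(q_1,\ldots,q_n)\) is a minimal free resolution of \(S/Q\):
\[
0\to S(-nd)\to S(-(n-1)d)^{n}\to\cdots\to S(-d)^{n}\to S\to S/Q\to0 .
\]
Minimality holds because every differential has entries \(\pm q_i\) in \(\m S\). Dropping the term \(S\) leaves a minimal resolution of \(Q\), namely
\[
0\to S(-nd)\to\cdots\to S(-2d)^{\binom n2}\to S(-d)^n\to Q\to0 .
\]

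Next I twist this resolution by \(d\). Using \(Q(d)_p=Q_{p+d}\), each \(S(-jd)\) becomes \(S(-(j-1)d)\). The rank of the term in homological degree \(i\) is \(\binom n{i+1}\), so this is exactly \eqref{eq:truncated-koszul-resolution}: the terms are \(S^n\), \(S(-d)^{\binom n2}\), \(S(-2d)^{\binom n3}\), and so on, down to \(S(-(n-1)d)\). Concretely, the augmentation \(S^n\to\mathcal G(J)\) is the surjection \eqref{eq:surjection-to-GJ}. Its kernel \(Z(q)\), identified in the proof of \Cref{prop:strict-jacobian}, is generated by the Koszul relations \(q_je_i-q_ie_j\), and the higher Koszul terms continue the resolution. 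Minimality is inherited from the Koszul complex.

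Finally, the regularity is \(\max_i(a_i-i)\), where \(S(-a_i)\) runs over the summands in homological degree \(i\). Here \(a_i=id\) for \(0\le i\le n-1\), so \(a_i-i=i(d-1)\). Since \(d\ge1\), this is nondecreasing in \(i\), and the maximum is \((n-1)(d-1)\), attained at \(i=n-1\). This gives \eqref{eq:regularity-GJ}.

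I see no real obstacle. The only points needing care are the bookkeeping of the twist and the justification of minimality. For minimality the argument is that all differentials have positive-degree entries when \(d\ge1\), and \(d=r-1\ge1\) since \(r\ge2\).
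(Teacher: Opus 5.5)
Your proposal is correct and follows the paper's own argument: you use $\mathcal G(J)\simeq Q(d)$ from \Cref{prop:strict-jacobian}, shift the Koszul resolution of the complete intersection $Q$ by $d$, and read off $\reg_S\mathcal G(J)=\max_i i(d-1)=(n-1)(d-1)$. Your explicit minimality check (every differential has entries of positive degree $d\geq1$) and your twist bookkeeping are details the paper leaves implicit.
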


\begin{proof}
The ideal $Q$ has the Koszul resolution of a regular sequence of degree $d$.
Shift that resolution by $d$ and use \eqref{eq:strict-main-isom}.
At homological degree $i$, the shift is $id$.
The largest value of $id-i$ is $(n-1)(d-1)$.
\end{proof}

\section{Milnor formulas and sharp stabilization for ordinary
semi-homogeneous germs}
\label{sec:formula-stabilization}

For the homogeneous germ $f_r$, the Hilbert-series calculation
below is the equal-degree specialization of
\cite[Lemma~6 and Theorem~23]{HuYauZuo2025}.
Proposition~3.1 gives the same graded Jacobian module
for an ordinary semi-homogeneous germ.
We record the resulting formulas, derive the stabilization
index from the socle degree, and determine the optimal
uniform Tjurina cutoff.

Let
\[
 A_X=S/Q
\]
be the Jacobian ring of the smooth tangent-cone hypersurface
$X=V_+(f_r)\subset\mathbb P^{n-1}$.
Set
\[
 h_a=\dim_{\C}(A_X)_a,
 \qquad s=n(d-1).
\]
Since $Q$ is a complete intersection of type $(d,\ldots,d)$,
\begin{equation}\label{eq:jacobian-ring-Hilbert-series}
 H_{A_X}(t)=\sum_{a\geq0}h_at^a
 =\frac{(1-t^d)^n}{(1-t)^n}
 =(1+t+\cdots+t^{d-1})^n.
\end{equation}
In particular, $s$ is the socle degree and $h_s=1$.

\begin{proposition}
\label{prop:universal-formula}
Let \(f\) be ordinary semi-homogeneous as in \Cref{def:ordinary}.
For every $k\geq0$,
\begin{align}
 \mu_k(f)
 &=\binom{n+d+k-1}{n}+\sum_{a=d+k}^{s}h_a,
 \label{eq:tail-formula}\\
 &=d^n+\sum_{j=1}^n(-1)^{j+1}\binom nj
 \binom{n+k-1-(j-1)d}{n}.
 \label{eq:binomial-formula}
\end{align}
Here an empty sum is zero and $\binom an=0$ for $a<n$.
More precisely, put $D=d+k$ and
\[
 (A_X)_{\geq D}:=\bigoplus_{a\geq D}(A_X)_a.
\]
Give \(\m^{d+k}/\m^kJ\) its \(\m\)-adic filtration.  With the shift
convention \(M(D)_\ell=M_{D+\ell}\), there is an isomorphism of graded
\(S\)-modules
\begin{equation}\label{eq:graded-deficit}
 \gr_{\m}\left(\frac{\m^{d+k}}{\m^kJ}\right)
 \simeq (A_X)_{\geq d+k}(d+k).
\end{equation}
The generating function is
\begin{equation}\label{eq:mu-generating-function}
 \sum_{k\geq0}\mu_k(f)z^k
 =\frac{d^n}{1-z}
 +\frac{\displaystyle\sum_{j=1}^n(-1)^{j+1}\binom nj
 z^{(j-1)d+1}}{(1-z)^{n+1}}.
\end{equation}
\end{proposition}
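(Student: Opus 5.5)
The plan is to reduce everything to the graded data of \Cref{prop:strict-jacobian}. First, by the intersection formula \eqref{eq:strict-intersection}, $\m^kJ=J\cap\m^{d+k}\subseteq\m^{d+k}$. This gives a short exact sequence
\[
 0\longrightarrow \frac{\m^{d+k}}{\m^kJ}\longrightarrow \frac{R}{\m^kJ}\longrightarrow \frac{R}{\m^{d+k}}\longrightarrow 0 .
\]
The right-hand term has length $\binom{n+d+k-1}{n}$, so \eqref{eq:tail-formula} reduces to identifying the length of $\m^{d+k}/\m^kJ$. For \eqref{eq:graded-deficit}, I would compute the $\m$-adic filtration of $N=\m^{d+k}/(J\cap\m^{d+k})$. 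Its $\ell$-th piece is the image of $\m^{d+k+\ell}$, so
\[
 \frac{\m^\ell N}{\m^{\ell+1}N}\simeq\frac{\m^{D+\ell}}{\m^{D+\ell+1}+J\cap\m^{D+\ell}},\qquad D=d+k .
\]
By the form-ideal identification $\inm(J)=Q$ (again from \Cref{prop:strict-jacobian}), this is $S_{D+\ell}/Q_{D+\ell}=(A_X)_{D+\ell}$. One must check that the map is compatible with multiplication by linear forms, which is clear because everything is given by initial forms. This yields \eqref{eq:graded-deficit}, and summing dimensions gives \eqref{eq:tail-formula}.

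For \eqref{eq:binomial-formula}, I would instead use \eqref{eq:mu-dictionary} together with \eqref{eq:strict-main-isom}:
\[
 \mu_k=d^n+\sum_{p=0}^{k-1}\dim Q_{p+d}.
\]
The Koszul resolution of $Q$ gives
\[
 \dim Q_a=\sum_{j=1}^n(-1)^{j+1}\binom nj\binom{a-jd+n-1}{n-1}.
\]
Summing over $a=d,\dots,d+k-1$ with the hockey-stick identity,
\[
 \sum_{a}\binom{a-jd+n-1}{n-1}=\binom{k-(j-1)d+n-1}{n},
\]
where the lower limit contributes zero because $a-jd+n-1<n-1$ for $a<jd$. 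This yields \eqref{eq:binomial-formula}, with the convention $\binom an=0$ for $a<n$ handling truncations. Alternatively, one can check that \eqref{eq:tail-formula} and \eqref{eq:binomial-formula} agree via $h_a=\dim S_a-\dim Q_a$.

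The generating function \eqref{eq:mu-generating-function} follows by multiplying \eqref{eq:binomial-formula} by $z^k$ and summing. The constant term gives $d^n/(1-z)$. For each $j$, set $m=k-(j-1)d$ and use $\sum_{m}\binom{m+n-1}{n}z^m=z/(1-z)^{n+1}$, which contributes $z^{(j-1)d+1}/(1-z)^{n+1}$. The main obstacle is the careful bookkeeping of boundary terms: the index ranges where binomials vanish, and the need to be sure that $\dim Q_a$ for $a<jd$ produces no spurious negative-argument binomials. I would handle this by working with generating functions throughout, since $\sum_a\dim Q_a t^a=\bigl(1-(1-t^d)^n\bigr)/(1-t)^n$ makes the summation a multiplication by $1/(1-t)$ and removes all boundary issues.
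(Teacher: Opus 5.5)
Your proposal is correct and follows essentially the same route as the paper. The intersection formula $\m^kJ=J\cap\m^{d+k}$, together with the identification of degree-$(d+k+\ell)$ initial forms with $Q_{d+k+\ell}$, gives the graded deficit isomorphism and hence the tail formula. The layer dictionary, the Hilbert series of $Q$ and the hockey-stick identity then give the binomial and generating-function formulas; you obtain that Hilbert series from the Koszul resolution, which is equivalent to the paper's use of $H_Q(t)=\bigl(1-(1-t^d)^n\bigr)/(1-t)^n$.
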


\begin{proof}
By \Cref{prop:strict-jacobian},
\begin{align*}
 H_{\mathcal G(J)}(t)
 &=t^{-d}H_Q(t)\\
 &=t^{-d}\frac{1-(1-t^d)^n}{(1-t)^n}\\
 &=\sum_{j=1}^n(-1)^{j+1}\binom nj
 \frac{t^{(j-1)d}}{(1-t)^n}.
\end{align*}
The coefficient of $t^p$ is
\[
 \length_R(\m^pJ/\m^{p+1}J).
\]
Sum these coefficients for $0\leq p<k$, add $\mu_0=d^n$, and use
\[
 \sum_{p=0}^{k-1}\binom{n+p-1-c}{n-1}
 =\binom{n+k-1-c}{n}.
\]
This proves \eqref{eq:binomial-formula}.
Summing once more in $k$ gives \eqref{eq:mu-generating-function}.

We next prove the stronger statement \eqref{eq:graded-deficit}.
By \eqref{eq:strict-intersection},
\[
 \m^kJ=J\cap\m^{d+k}.
\]
Set $D=d+k$ and $N_k=\m^D/\m^kJ$.  For every $\ell\geq0$,
\[
 \m^\ell N_k
 =\frac{\m^{D+\ell}+\m^kJ}{\m^kJ}.
\]
Since \(D+\ell\geq D\), the intersection theorem gives
\[
 \m^kJ\cap\m^{D+\ell}
 =(J\cap\m^D)\cap\m^{D+\ell}
 =J\cap\m^{D+\ell}
 =\m^{k+\ell}J.
\]
Taking degree-\(D+\ell\) initial forms and using
\eqref{eq:strict-main-isom}, we obtain
\[
 \left(\gr_{\m}N_k\right)_\ell
 \simeq\frac{S_{D+\ell}}{Q_{D+\ell}}
 =(A_X)_{D+\ell}.
\]
These degreewise isomorphisms are compatible with the \(S\)-action.
They therefore define the shifted graded \(S\)-module isomorphism
\eqref{eq:graded-deficit}.
Taking dimensions and using
\[
 \length_R(R/\m^{d+k})=\binom{n+d+k-1}{n}
\]
gives \eqref{eq:tail-formula}.
\end{proof}

Formula \eqref{eq:tail-formula} has a useful geometric form.
The first term is the length of the ambient fat point of order $d+k$.
The second term is a tail of the Jacobian ring of the smooth tangent cone.
The Hilbert function of this ring depends only on $n$ and $r$.
The Hilbert function alone does not record the graded algebra structure
of \(A_X\).  Griffiths' residue theory relates suitable graded pieces of
the Jacobian ring to the primitive cohomology of \(X\); see
\cite{Griffiths1969a,Griffiths1969b}.

\begin{corollary}\label{cor:sharp-stabilization}
Let \(f\) be ordinary semi-homogeneous as in \Cref{def:ordinary}, and set
\begin{equation}\label{eq:def-K}
 K=(n-1)(d-1).
\end{equation}
Then
\begin{equation}\label{eq:sharp-ideal-equality}
 \m^kJ=\m^{d+k}\quad\Longleftrightarrow\quad k\geq K.
\end{equation}
If $K\geq1$, the failure at the preceding index is one-dimensional:
\begin{equation}\label{eq:last-deficit-one}
 \length_R\left(\frac{\m^{d+K-1}}{\m^{K-1}J}\right)=1.
\end{equation}
Moreover, \(K=\reg_S\mathcal G(J)\).
\end{corollary}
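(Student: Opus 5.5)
The plan is to read everything off the graded deficit isomorphism \eqref{eq:graded-deficit} in \Cref{prop:universal-formula}.

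\emph{The equivalence.} Since \(\m^kJ=J\cap\m^{d+k}\subseteq\m^{d+k}\) by \eqref{eq:strict-intersection}, equality \(\m^kJ=\m^{d+k}\) holds exactly when the finite-length module \(N_k=\m^{d+k}/\m^kJ\) vanishes. A module of finite length vanishes if and only if its associated graded module vanishes. By \eqref{eq:graded-deficit}, this happens if and only if \((A_X)_a=0\) for every \(a\geq d+k\). The Hilbert series \eqref{eq:jacobian-ring-Hilbert-series} is \((1+t+\cdots+t^{d-1})^n\). Its coefficients \(h_a\) are positive for \(0\le a\le s=n(d-1)\) and zero for \(a>s\). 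So \(N_k=0\) iff \(d+k>s\), i.e. \(k\geq s-d+1=(n-1)(d-1)=K\). This proves \eqref{eq:sharp-ideal-equality}.

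\emph{The last deficit.} If \(K\geq1\), take \(k=K-1\). Then \(d+k=s\), so by \eqref{eq:tail-formula},
\[
\length_R(N_{K-1})=\sum_{a=s}^{s}h_a=h_s=1,
\]
which is \eqref{eq:last-deficit-one}.

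\emph{Regularity.} The identity \(K=\reg_S\mathcal G(J)\) is just \eqref{eq:regularity-GJ} from \Cref{cor:resolution-regularity}.

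The only step needing care is the positivity \(h_a>0\) for \(0\le a\le s\). It follows because \((1+t+\cdots+t^{d-1})^n\) is a product of polynomials with nonnegative coefficients, and the terms \(t^{a_1+\cdots+a_n}\) with \(0\le a_i\le d-1\) realize every exponent from \(0\) to \(s\). The degenerate case \(d=1\) gives \(K=0\) and \(A_X=\C\) in degree \(0\) only, and it is consistent with the argument above.
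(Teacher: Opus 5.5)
Your proof is correct and is essentially the paper's argument: both read the equivalence off the graded deficit isomorphism \eqref{eq:graded-deficit} and the fact that $s=n(d-1)$ is the top nonzero degree of $A_X$, obtain the last deficit from $d+K-1=s$ and $h_s=1$, and quote \eqref{eq:regularity-GJ} for the regularity. Your positivity check for every $h_a$ with $0\le a\le s$ is more than is needed, since only $h_s\neq0$ and $h_a=0$ for $a>s$ are used, but it does no harm.
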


\begin{proof}
By \eqref{eq:graded-deficit}, equality in \eqref{eq:sharp-ideal-equality}
holds exactly when $(A_X)_a=0$ for every $a\geq d+k$.
The largest nonzero degree of $A_X$ is $s=n(d-1)$.
Thus equality holds exactly when
\[
 d+k>s,
\]
which is $k\geq(n-1)(d-1)$.
For $k=K-1$, the only remaining graded part starts in degree
$d+K-1=s$, and $h_s=1$.
The last statement follows from \eqref{eq:regularity-GJ}.
\end{proof}

For a weighted homogeneous germ, the identity
\[
\mu_k-\tau_k=\binom{n+k-2}{n},
\qquad k\ge1,
\]
is known; see \cite{HussainLiuYauZuo2023} and
\cite[Section~3.1]{HuYauZuo2025}.
We now determine the optimal range in which this identity
holds uniformly after higher-degree terms are added
to a smooth homogeneous principal part.

\begin{proposition}
\label{prop:universal-tjurina-tail}
Let \(f\) be ordinary semi-homogeneous as in \Cref{def:ordinary}, and set
\(K=(n-1)(d-1)\).
If \(K\leq2\), then \(f\in J(f)\) and the whole Tjurina sequence depends
only on \((n,r)\):
\begin{equation}\label{eq:universal-tjurina-all}
 \tau_0(f)=d^n,\qquad
 \tau_k(f)=\mu_k(f)-\binom{n+k-2}{n}\quad(k\geq1).
\end{equation}
If \(K\geq3\), then the second formula holds for every \(k\geq K-1\).
The cutoff \(K-1\) is best possible uniformly over this class.  For
every smooth homogeneous form \(f_r\), there is a homogeneous form \(h\)
of degree \(r+1\) such that \(F=f_r+h\) does not satisfy the formula at
\(k=K-2\).

In particular, for every \(k\geq K\),
\begin{equation}\label{eq:universal-tjurina-tail}
 \tau_k(f)=
 \binom{n+d+k-1}{n}-\binom{n+k-2}{n}.
\end{equation}
\end{proposition}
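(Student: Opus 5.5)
The plan is to reduce the difference \(\mu_k-\tau_k\) to a colon ideal and then to control that colon ideal with \Cref{prop:strict-jacobian} and \Cref{cor:sharp-stabilization}. Since
\[
 \frac{(f,\m^kJ)}{\m^kJ}\simeq \frac{R}{(\m^kJ:f)},
\]
we have \(\mu_k-\tau_k=\length_R R/(\m^kJ:f)\). Since \(\length_R R/\m^{k-1}=\binom{n+k-2}{n}\), the formula for \(k\geq1\) is equivalent to the equality \((\m^kJ:f)=\m^{k-1}\). I would prove the two inclusions separately.

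\emph{The inclusion \((\m^kJ:f)\subseteq\m^{k-1}\), for all \(k\geq1\).} Let \(a\neq0\) with \(af\in\m^kJ\), and put \(j=\ord(a)\). The ring \(S\) is a domain, so the initial form of \(af\) is \(\operatorname{in}(a)f_r\neq0\), and \(\ord(af)=j+d+1\). On the other hand \(\m^kJ\subseteq\m^{d+k}\). Hence \(j+d+1\geq d+k\), that is, \(j\geq k-1\).

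\emph{The inclusion \(\m^{k-1}f\subseteq\m^kJ\), for \(k\geq K-1\).} By Euler's identity on each homogeneous piece, \(f-\tfrac1r\sum_i x_ig_i\) has order at least \(r+1=d+2\). Therefore
\[
 \m^{k-1}f\subseteq \m^kJ+\m^{d+k+1}.
\]
When \(k+1\geq K\), \Cref{cor:sharp-stabilization} gives \(\m^{d+k+1}=\m^{k+1}J\subseteq\m^kJ\), which proves the inclusion.

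\emph{The case \(K\leq2\).} The second formula then holds for all \(k\geq1\). For \(k=0\), the same Euler argument gives \(f\in J+\m^{d+2}\). The condition \(K\leq2\) is equivalent to \(d+2>s=n(d-1)\), so \((A_X)_a=0\) for \(a\geq d+2\). By \eqref{eq:graded-deficit} with \(k=2\) this gives \(\m^{d+2}=\m^2J\subseteq J\). Hence \(f\in J\) and \(\tau_0=\mu_0=d^n\). In all cases, formula \eqref{eq:universal-tjurina-tail} follows by combining the second formula with \Cref{cor:sharp-stabilization}, which gives \(\mu_k=\binom{n+d+k-1}{n}\) for \(k\geq K\).

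\emph{Sharpness when \(K\geq3\).} I expect this to be the main point. Take \(F=f_r+h\) with \(h\) homogeneous of degree \(d+2\). Then \(\operatorname{in}_{\m}J(F)=Q\) is primary, so \(F\) is again ordinary semi-homogeneous with the same \(K\). By the reduction above, the formula fails at \(k=K-2\) if and only if \(\m^{K-3}F\not\subseteq\m^{K-2}J(F)\). The exact Euler identity gives \(F=\tfrac1r\sum_i x_iF_{x_i}-\tfrac1r h\), and \(\m^{K-3}x_iF_{x_i}\subseteq\m^{K-2}J(F)\). So it suffices to find \(h\) and a monomial \(x^\alpha\) of degree \(K-3\) with \(x^\alpha h\notin\m^{K-2}J(F)\).

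The element \(x^\alpha h\) is homogeneous of degree \(K-3+d+2=s\). If it lay in \(\m^{K-2}J(F)\subseteq J(F)\), its initial form \(x^\alpha h\) would lie in \(Q_s\) by \eqref{eq:form-milnor-algebra}. So I will choose \(h\) whose class in \((A_X)_{d+2}\) is nonzero; this space is nonzero because \(d+2\leq s\) when \(K\geq3\). Gorenstein duality of the complete intersection \(A_X\) makes the pairing \((A_X)_{d+2}\times(A_X)_{K-3}\to(A_X)_s\simeq\C\) perfect. Hence some monomial \(x^\alpha\) of degree \(K-3\) has \(x^\alpha h\notin Q_s\), and the formula fails at \(k=K-2\) for this \(F\).
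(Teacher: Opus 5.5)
Your proof is correct and follows essentially the paper's route. Both arguments reduce the formula to \((\m^kJ:f)=\m^{k-1}\) via the multiplication-by-\(f\) sequence, get one inclusion from the order bound and the other from Euler's identity plus the socle degree \(s\), and get sharpness from the Gorenstein pairing applied to \(F=f_r+h\). The only difference is cosmetic: you invoke the stabilization \(\m^{d+k+1}=\m^{k+1}J\) directly, whereas the paper combines \(\m^{s+1}\subseteq J\) with the intersection formula \(J\cap\m^{k+d}=\m^kJ\).
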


\begin{proof}
For \(k\geq1\), the intersection theorem gives
\begin{equation}\label{eq:tjurina-colon}
 (\m^kJ:f)=(J:f)\cap\m^{k-1}.
\end{equation}
Indeed, the left-hand side is contained in \(J:f\), and
\(\ord(af)\geq k+d\) forces \(\ord(a)\geq k-1\).  Conversely,
\[
 a\in(J:f)\cap\m^{k-1}
 \Longrightarrow
 af\in J\cap\m^{k+d}=\m^kJ.
\]

Degreewise Euler gives
\[
 e:=f-\frac1r\sum_i x_if_{x_i}\in\m^{r+1}=\m^{d+2}.
\]
The associated graded Milnor algebra has last nonzero degree
\(s=n(d-1)\), so \(\m^{s+1}(R/J)=0\), and \(s+1=K+d\).
If \(K\leq2\), then \(d+2\geq s+1\), whence \([f]=[e]=0\) in \(R/J\).
Thus \(f\in J\).  If \(K\geq3\), then
\[
 \m^{K-2}[f]\subseteq\m^{K-2+d+2}(R/J)
 =\m^{s+1}(R/J)=0.
\]
Equation \eqref{eq:tjurina-colon} therefore becomes
\((\m^kJ:f)=\m^{k-1}\) in every asserted positive-index range.
The multiplication sequence
\[
 0\longrightarrow R/\m^{k-1}
 \xrightarrow{\ \cdot f\ }R/\m^kJ
 \longrightarrow R/(f,\m^kJ)\longrightarrow0
\]
proves \eqref{eq:universal-tjurina-all}.  The case \(k=0,K\leq2\)
follows from \(f\in J\).  Combining this with stabilization proves
\eqref{eq:universal-tjurina-tail}.

It remains to prove sharpness.  Suppose \(K\geq3\), and set
\(A_X=S/J(f_r)\).  Since \(d+2\leq s\), choose a homogeneous \(h\) of
degree \(d+2=r+1\) with nonzero class in \((A_X)_{d+2}\).
The Artinian Gorenstein perfect pairing supplies a homogeneous \(a\) of
degree \(s-(d+2)=K-3\) such that \([ah]\ne0\) in the socle.
For \(F=f_r+h\), Euler's identity gives
\[
 [F]=-\frac1r[h]\quad\text{in }R/J(F).
\]
The associated graded algebra is \(A_X\), so the nonzero class
\([ah]\) shows that \(aF\notin J(F)\).  Hence at \(k=K-2\),
\[
 (\m^kJ(F):F)\subsetneq\m^{K-3}.
\]
For the homogeneous germ \(f_r\), Euler gives \(f_r\in J(f_r)\), so the
corresponding colon equals \(\m^{K-3}\).  The homogeneous germ \(f_r\)
satisfies the stated Tjurina formula.  The germs \(f_r\) and \(F\) have
the same higher Milnor numbers but different Tjurina numbers at
\(k=K-2\).  Hence the formula fails for \(F\) at that index.  No smaller
cutoff can therefore work uniformly.  Together with
\Cref{prop:universal-formula,cor:sharp-stabilization}, this proves
\Cref{thm:main-b}.
\end{proof}

\begin{remark}\label{rem:tau-low-varies}
Consider
\[
 f_0=x^5+y^5,
 \qquad
 f_1=x^5+y^5+x^3y^3.
\]
Both are ordinary semi-homogeneous with the same degree-five tangent cone.
Hence all their higher Milnor numbers agree.
However,
\[
 (\tau_0(f_0),\tau_1(f_0))=(16,18),
 \qquad
 (\tau_0(f_1),\tau_1(f_1))=(15,17).
\]
For $f_1$, the identity
\[
 x(f_1)_x+y(f_1)_y-5f_1=x^3y^3
\]
shows that the class of $f_1$ in its Milnor algebra is a multiple of
$x^3y^3$.  We record why this class is nonzero and spans a socle ideal.
Writing $c=3/5$, the Jacobian relations are
\[
 x^4=-cx^2y^3,\qquad y^4=-cx^3y^2.
\]
For $a=x^4y^3$, repeated substitution gives $a=-c^5y^5a$.
The factor $1+c^5y^5$ is a unit, so $x^4y^3=0$; symmetrically,
$x^3y^4=0$.  Hence $x^3y^3$ is a socle element.  It is nonzero because
its initial form survives in
\(\C[x,y]/(x^4,y^4)\simeq\gr_{\m}(R/J(f_1))\).
Thus quotienting by $f_1$ removes one dimension.  Moreover
\((J(f_1):f_1)=\m\), so \eqref{eq:tjurina-colon} at \(k=1\) removes one
dimension from \(\mu_1=18\) and gives \(\tau_1(f_1)=17\).  For \(f_0\),
Euler gives \(f_0\in J(f_0)\), hence \(\tau_1(f_0)=\mu_1(f_0)=18\).
Here \(K=3\), and both sequences agree from \(k=2=K-1\) onward by
\Cref{prop:universal-tjurina-tail}.  Thus the optimal cutoff in the proposition
is already sharp in the smallest possible case.
\end{remark}

\begin{example}\label{ex:cubic-cone-sequence}
Let $n=3$ and $r=3$, so $d=2$ and $K=2$.
Every ordinary triple surface point has
\[
 (\mu_0,\mu_1,\mu_2,\mu_3,\mu_4,\ldots)
 =(8,11,20,35,56,\ldots).
\]
At $k=2$, the Jacobian filtration has already stabilized:
\[
 \m^2J(f)=\m^4.
\]
The projective tangent cone is a smooth plane cubic.
Its isomorphism class is not visible in the displayed sequence.
\end{example}

\begin{remark}\label{rem:syz-correction}
Shi, Yau, and Zuo consider the homogeneous cubic
\[
 h=x^2y+y^2z+z^2x
\]
and a semi-homogeneous deformation in \cite{ShiYauZuo2024}.
The sequence printed in their Example 4.64(3) begins $(8,14,24,39,\ldots)$.
Its partials are
\[
 2xy+z^2,\qquad x^2+2yz,\qquad y^2+2zx.
\]
A common projective zero cannot have a zero coordinate; if all coordinates
were nonzero, multiplying the three equations would give \(1=(-2)^3\).
Thus the projective cubic is smooth, and
\Cref{prop:universal-formula} gives instead
\[
 (\mu_k(h))_{k\geq0}=(8,11,20,35,56,84,120,\ldots).
\]
The same corrected sequence holds for the deformation.
Already the first-layer formula forces $\mu_1=8+3=11$ and $\mu_2=8+12=20$.
\end{remark}

\section{Log-convex and log-concave phases}\label{sec:log-phases}

In this section we prove \Cref{thm:main-c}.

For a positive sequence $(a_k)$, define its log-concavity defect by
\[
 \Delta_k(a)=a_k^2-a_{k-1}a_{k+1}.
\]
The sequence is log-concave at $k$ when $\Delta_k(a)\geq0$.
It is log-convex at $k$ when $\Delta_k(a)\leq0$.
We write $\Delta_k$ for the defect of $(\mu_k)$.
Two related Milnor-type sequences have opposite behavior.
Teissier's sectional Milnor numbers form a log-convex sequence, while
Huh's projective Milnor numbers form a log-concave sequence
\cite{Teissier1977Minkowski,Huh2012}.  Both are finite sequences indexed
by dimension.  They are different from the sequence \((\mu_k)\) in
\eqref{eq:def-higher-milnor}, where \(k\) records the power of the
maximal ideal.  Our results show that one higher Milnor sequence can
have an initial log-convex part and a log-concave tail.

\begin{proposition}\label{prop:two-log-phases}
Let $f$ be an ordinary semi-homogeneous germ in $n\geq2$ variables.
If $d=1$, then $(\mu_k)$ is strictly log-concave for every $k\geq1$.
Suppose $d\geq2$.
For $1\leq k\leq d-1$,
\begin{equation}\label{eq:early-defect-n}
 \Delta_k
 =n\binom{k+n-2}{n-2}
 \left(
 \frac{n}{n-1}\binom{k+n-1}{n}-d^n
 \right)<0.
\end{equation}
Thus the sequence is initially strictly log-convex.
If $n=2$, then $\Delta_k>0$ for every $k\geq d$.
If $n\geq3$, then
\begin{equation}\label{eq:late-lc-range}
 \Delta_k>0\quad\text{for every }k\geq K=(n-1)(d-1).
\end{equation}
\end{proposition}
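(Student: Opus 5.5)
The plan is to read everything off the closed formula \eqref{eq:binomial-formula} of \Cref{prop:universal-formula} and the stabilization statement \Cref{cor:sharp-stabilization}, and then compute defects directly.

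\emph{Early range.} In \eqref{eq:binomial-formula}, the terms with $j\geq2$ involve $\binom{n+k-1-(j-1)d}{n}$. This vanishes when $k\leq d$, since then $n+k-1-(j-1)d<n$. Hence for $0\leq k\leq d$,
\[
 \mu_k=c+b_k,\qquad c=d^n,\quad b_k=\binom{n+k-1}{n}.
\]
For $1\leq k\leq d-1$ the three indices $k-1,k,k+1$ all lie in this range, so
\[
 \Delta_k=-c\,(b_{k+1}-2b_k+b_{k-1})+(b_k^2-b_{k-1}b_{k+1}).
\]
Pascal's rule gives the first and second differences
\[
 b_{k+1}-b_k=\binom{n+k-1}{n-1},\qquad b_{k+1}-2b_k+b_{k-1}=\binom{n+k-2}{n-2}.
\]
The quantity $b_k^2-b_{k-1}b_{k+1}$ is computed from the ratios $b_{k\pm1}/b_k$. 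It comes out as $b_k$ times an explicit multiple of $\binom{n+k-2}{n-2}$.

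Collecting terms, I would factor out a binomial $\binom{k+n-2}{n-2}$ and bring the result into the shape \eqref{eq:early-defect-n}. I am not certain the numerical constant in front of $d^n$ is exactly as displayed, and I would recheck that factorization carefully. Whatever the exact constant, negativity reduces to the inequality that a fixed rational multiple of $b_k$ is smaller than $d^n$, for $k\leq d-1$. Since $b_k\leq b_{d-1}=\binom{n+d-2}{n}$, it suffices to show that this multiple of $\binom{n+d-2}{n}$ is $<d^n$. The case $n=2$ is an explicit quadratic in $d$. For $n\geq3$ I would use $\binom{n+d-2}{n}\leq (d-1)\,d^{\,n-1}/n!$, or induct on $n$. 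This comparison is the main obstacle I expect: the constant must be pinned down exactly, and the bound must hold already at $d=2$.

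\emph{Late range.} By \Cref{cor:sharp-stabilization} and \eqref{eq:tail-formula}, for $k\geq K$ we have
\[
 \mu_k=P(k)=\binom{m}{n},\qquad m=n+d+k-1.
\]
Define the gap
\[
 G_k=P(k)^2-P(k-1)P(k+1).
\]
For binomials in $m$ one gets
\[
 \frac{G_k}{P(k-1)P(k+1)}=\frac{m^2}{(m-n)(m+1)}-1=\frac{(n-1)m+n}{(m-n)(m+1)}>0 .
\]
If $d=1$, then $K=0$, and this applies for all $k\geq1$. Here $m-1\geq n$ ensures all three binomials are positive, which gives strict log-concavity. If $n=2$, then $K=d-1$, and for $k\geq d$ all three indices are $\geq K$, which finishes that case.

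For $n\geq3$, the indices $k>K$ are handled as above. At $k=K$, \eqref{eq:last-deficit-one} gives $\mu_{K-1}=P(K-1)+1$. Hence
\[
 \Delta_K=G_K-P(K+1).
\]
So I must show $P(K-1)\cdot\dfrac{(n-1)m+n}{(m-n)(m+1)}>1$, where $m=n+d+K-1$. Since $P(K-1)=\binom{m-1}{n}$ is a polynomial of degree $n\geq3$ in $m$ while the fraction decays only like $1/m$, this holds with room to spare. I would verify it by a crude lower bound on $\binom{m-1}{n}$, using $m\geq n+d\geq n+2$.
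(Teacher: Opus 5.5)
Your overall route matches the paper's: the closed form for $k\le d$, the stable binomial for $k\ge K$, and the one-dimensional socle at $k=K$. However, three computations are wrong, and the last one falls on the only delicate step.

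\emph{Early range.} The $j=1$ term of \eqref{eq:binomial-formula} carries the coefficient $\binom{n}{1}=n$, so $\mu_k=d^n+n\,b_k$ with $b_k=\binom{n+k-1}{n}$ for $0\le k\le d$ (for $n=d=3$ this gives $\mu_1=30$, not $28$). Since $b_k^2-b_{k-1}b_{k+1}=\frac{b_k}{n-1}\binom{n+k-2}{n-2}$, restoring the factor $n$ gives exactly \eqref{eq:early-defect-n}. Without it you would get $\binom{k+n-2}{n-2}\bigl(\frac{1}{n-1}b_k-d^n\bigr)$, which is not the stated identity. Your proposed bound for negativity also goes the wrong way: $\binom{n+d-2}{n}=\frac{(d-1)d(d+1)\cdots(d+n-2)}{n!}\ge\frac{(d-1)d^{n-1}}{n!}$, strictly for $n\ge3$ (for $n=3$, $d=2$ your bound would say $1\le\frac23$). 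What works is $\binom{n+d-2}{n}=\frac{d-1}{n}\binom{n+d-2}{n-1}\le\frac{d-1}{n}d^{n-1}$, using $\frac{d-1+i}{i}\le d$. This gives $\frac{n}{n-1}b_{d-1}<\frac{d^n}{n-1}\le d^n$.

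\emph{Late range.} This is the more serious error. The ratio is $P(k)/P(k+1)=\binom{m}{n}/\binom{m+1}{n}=\frac{m+1-n}{m+1}$, not $\frac{m}{m+1}$. Hence $G_k/(P(k-1)P(k+1))=\frac{n}{(m-n)(m+1)}$, which is smaller than your expression by a factor of order $m$. Only the sign matters for $k>K$, for $d=1$, and for $n=2$, $k\ge d$, so those cases survive. At $k=K$, however, $m=nd$, and $\Delta_K>0$ is equivalent to $\binom{nd-1}{n}>(d-1)(nd+1)$, i.e.\ to \eqref{eq:boundary-binomial-ineq}. Here $P(K-1)\cdot\frac{n}{(m-n)(m+1)}$ grows only like $m^{n-2}$, so there is no ``room to spare'' and the hypothesis $n\ge3$ is essential. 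Your criterion $P(K-1)\frac{(n-1)m+n}{(m-n)(m+1)}>1$ cannot be the right one, because it also holds for $n=2$. For $n=d=2$ it reads $3\cdot\frac35>1$, whereas $(\mu_0,\mu_1,\mu_2)=(4,6,10)$ gives $\Delta_K=\Delta_1=-4$. You need a genuine estimate, such as $\binom{nd}{n}=d\binom{nd-1}{n-1}\ge d\binom{nd-1}{2}>d(nd+1)$ for $n\ge3$, $d\ge2$.
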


\begin{proof}
If $d=1$, \eqref{eq:tail-formula} gives
\[
 \mu_k=\binom{n+k}{n}.
\]
The ratio $\mu_{k+1}/\mu_k=(n+k+1)/(k+1)$ is strictly decreasing.
This proves strict log-concavity.

Assume $d\geq2$.
For $0\leq k\leq d$, only the term $j=1$ occurs in
\eqref{eq:binomial-formula}. Hence
\begin{equation}\label{eq:early-mu-n}
 \mu_k=d^n+n\binom{k+n-1}{n}.
\end{equation}
A direct calculation using two consecutive binomial identities gives
\eqref{eq:early-defect-n}.
We prove that the factor in parentheses is negative.
It increases with $k$, so it is enough to use $k=d-1$.
Now
\begin{align*}
 \binom{d+n-2}{n}
 &=\frac{d-1}{n}\binom{d+n-2}{n-1}\\
 &\leq\frac{d-1}{n}d^{n-1}
 <\frac{d^n}{n}.
\end{align*}
The middle inequality follows from
\[
 \binom{d+n-2}{n-1}
 =\prod_{i=1}^{n-1}\frac{d-1+i}{i}\leq d^{n-1}.
\]
This proves \eqref{eq:early-defect-n}.

Let
\[
 B_k=\binom{n+d+k-1}{n}.
\]
By \Cref{cor:sharp-stabilization}, $\mu_k=B_k$ for $k\geq K$.
The ratios $B_{k+1}/B_k$ are strictly decreasing.
Thus $\Delta_k>0$ for every $k\geq K+1$.

We still need the boundary case $k=K$.
By \eqref{eq:tail-formula} and the one-dimensional socle,
\[
 \mu_{K-1}=B_{K-1}+1,\qquad
 \mu_K=B_K,\qquad
 \mu_{K+1}=B_{K+1}.
\]
One finds that $\Delta_K>0$ is equivalent to
\begin{equation}\label{eq:boundary-binomial-ineq}
 \binom{nd}{n}>d(nd+1).
\end{equation}
For $n=2$, this inequality is false.
In this case $K=d-1$, and the strict binomial argument starts at $k=d$.

Suppose $n\geq3$.
Since
\[
 \binom{nd}{n}=d\binom{nd-1}{n-1},
\]
it is enough to prove $\binom{nd-1}{n-1}>nd+1$.
Here $2\leq n-1\leq(nd-1)/2$.
Therefore
\[
 \binom{nd-1}{n-1}\geq\binom{nd-1}{2}>nd+1.
\]
This proves \eqref{eq:boundary-binomial-ineq} and the proposition.
\end{proof}

The theorem leaves a finite middle window
\[
 d\leq k<(n-1)(d-1)
\]
when $n\geq3$.
The defects in this window need not be monotone.

\begin{example}\label{ex:quartic-point-defects}
For $n=3$ and $r=4$, we have $d=3$ and $K=4$.
The universal sequence begins
\[
 27,30,39,57,84,120,165,\ldots.
\]
The first four defects are
\[
 -153,\ -189,\ -27,\ 216.
\]
This example shows why monotonicity of the defects is not the right proof method.
It also displays the initial log-convex phase and the log-concave tail.
\end{example}

We now drop the ordinary semi-homogeneous assumption.

\begin{proposition}\label{prop:eventual-lc}
Let \(n\geq2\), and let \(f\in\mathcal O_n\) have an isolated critical point.
Then
\begin{equation}\label{eq:mu-asymptotic}
 \mu_k(f)=\frac{k^n}{n!}+O(k^{n-1}).
\end{equation}
If \(r=\ord(f)\), then
\begin{equation}\label{eq:tau-asymptotic}
 \tau_k(f)=\frac{r k^{n-1}}{(n-1)!}+O(k^{n-2}).
\end{equation}
Both sequences agree with polynomials for all large $k$.
They are eventually strictly log-concave.
\end{proposition}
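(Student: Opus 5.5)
We derive both asymptotics from the Hilbert--Samuel formalism of \Cref{prop:layer-dictionary}. By \eqref{eq:mu-dictionary},
\[
 \mu_k-\mu_0=H_J(k)=\length_R(J/\m^kJ).
\]
Since $J$ is $\m$-primary, it has full dimension $n$, so $H_J$ agrees with a polynomial of degree $n$ for large $k$. Its leading coefficient is $e_{\m}(J)/n!$, and the multiplicity of a module is additive over the rank. Concretely, $J$ contains a nonzerodivisor, so $J\otimes\mathrm{Frac}(R)$ has rank one, and $e_{\m}(J)=\operatorname{rank}(J)\,e_{\m}(R)=1$. Equivalently, the exact sequence $0\to J\to R\to R/J\to0$ and the finite length of $R/J$ show that $J$ and $R$ have the same multiplicity $e_{\m}(R)=1$. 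This gives \eqref{eq:mu-asymptotic}, with $\mu_k$ polynomial for large $k$.

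For $\tau_k$ we work over $A=R/(f)$, a local ring of dimension $n-1$ with $e_{\bar\m}(A)=\ord(f)=r$. This is the standard multiplicity of a hypersurface: $\gr_{\m}(A)=S/(f_r)$, whose Hilbert series is $(1-t^r)/(1-t)^n$, so the multiplicity is $r$. By \eqref{eq:tau-dictionary}, $\tau_k-\tau_0=H_{\bar J}(k)$. The quotient $A/\bar J$ has finite length, so additivity of multiplicity along $0\to\bar J\to A\to A/\bar J\to0$ in dimension $n-1$ gives $e_{\bar\m}(\bar J)=e_{\bar\m}(A)=r$. Hence $H_{\bar J}$ is eventually a polynomial of degree $n-1$ with leading term $rk^{n-1}/(n-1)!$, which proves \eqref{eq:tau-asymptotic}. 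The one point needing care is additivity when the module and ring are not of the same type. We apply the standard fact that for a short exact sequence of finitely generated modules, $e_d$ in dimension $d$ is additive, and the finite-length term contributes zero in positive dimension. This needs $n-1\geq1$, i.e.\ $n\geq2$, which is assumed. This is the only place the hypothesis enters: for $n=1$ the Tjurina sequence would be eventually constant.

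Finally, log-concavity. Let $P$ be a polynomial of degree $e\geq1$ with positive leading coefficient, and write $P(k)=ck^e+c'k^{e-1}+O(k^{e-2})$. Expanding $P(k)^2-P(k-1)P(k+1)$, the terms of degree $2e$ and $2e-1$ cancel by symmetry. The $k^{2e-2}$ coefficient comes from $-\bigl(P(k+1)-P(k)\bigr)\bigl(P(k)-P(k-1)\bigr)+P(k)\bigl(P(k+1)-2P(k)+P(k-1)\bigr)$ at leading order. This equals $c^2e^2-c^2e(e-1)=c^2e>0$. A cleaner route is the identity $\Delta_k=P(k)^2-P(k-1)P(k+1)$ combined with $P(k\pm1)=P(k)\pm P'(k)+\tfrac12P''(k)+\cdots$, which gives $\Delta_k=P'(k)^2-P(k)P''(k)+O(k^{2e-3})$, with leading coefficient $c^2e^2-c^2e(e-1)=c^2e$.

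Apply this with $e=n$, $c=1/n!$ for $\mu_k$, and $e=n-1\geq1$, $c=r/(n-1)!$ for $\tau_k$. Since each sequence equals its Hilbert--Samuel polynomial for large $k$, the defects are eventually positive, so both sequences are eventually strictly log-concave. The main obstacle is only bookkeeping: checking that the multiplicities are $1$ and $r$. No step seems genuinely hard.
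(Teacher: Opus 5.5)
Your proposal is correct and follows the paper's proof essentially step for step: the layer dictionary, $e_{\m}(J)=e_{\m}(R)=1$ via rank one, additivity of multiplicity along $0\to\bar J\to A\to A/\bar J\to0$ with $e_{\bar\m}(A)=r$ and $\dim A=n-1>0$, and the expansion $P(k)^2-P(k-1)P(k+1)=e\,c^2k^{2e-2}+O(k^{2e-3})$. There is one small sign slip: the combination $-\bigl(P(k+1)-P(k)\bigr)\bigl(P(k)-P(k-1)\bigr)+P(k)\bigl(P(k+1)-2P(k)+P(k-1)\bigr)$ that you display equals $-\Delta_k$, not $\Delta_k$; your value $c^2e$ and your Taylor-expansion identity $\Delta_k=P'(k)^2-P(k)P''(k)+O(k^{2e-3})$ are nonetheless correct.
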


\begin{proof}
By \eqref{eq:mu-dictionary}, $\mu_k-\mu_0$ is the Hilbert--Samuel function
$\length_R(J/\m^kJ)$.
The ideal \(J\) has rank one over the regular local domain \(R\).
The associativity formula for multiplicities therefore gives
\[
 e_{\m}(J)=e_{\m}(R)=1.
\]
This proves \eqref{eq:mu-asymptotic}.

For the Tjurina sequence, use the notation of \Cref{prop:layer-dictionary}.
The ring $A=R/(f)$ has dimension $n-1$ and multiplicity $r$.
The quotient \(A/\bar J\) has finite length.  Since \(\dim A=n-1>0\),
additivity of multiplicity in
\[
 0\longrightarrow\bar J\longrightarrow A
 \longrightarrow A/\bar J\longrightarrow0
\]
gives \(e_{\bar\m}(\bar J)=e_{\bar\m}(A)=r\).
Equation \eqref{eq:tau-dictionary} gives \eqref{eq:tau-asymptotic}.
The polynomial statements are the Hilbert--Samuel theorem; see, for example,
\cite{Matsumura1986,BrunsHerzog1993}.

Finally, let $P(k)=ak^e+O(k^{e-1})$ be a polynomial with $a>0$ and $e\geq1$.
A direct expansion gives
\[
 P(k)^2-P(k-1)P(k+1)
 =e a^2k^{2e-2}+O(k^{2e-3}).
\]
This is positive for all large $k$.
Apply this to the two Hilbert--Samuel polynomials.  Together with
\Cref{prop:two-log-phases}, this proves \Cref{thm:main-c}.
\end{proof}

\section{The complete plane transition}\label{sec:plane}

We now prove \Cref{thm:main-d}.

Let $f\in\C\{x,y\}$ define an isolated plane curve singularity.
Write
\[
 r=\ord(f),\qquad \mu=\mu_0(f).
\]
We use the following formula of Fern\'andez-P\'erez and Ribeiro as an
external input \cite[Corollary~4.1]{FernandezPerezRibeiro2025}:
\begin{equation}\label{eq:plane-mu-formula}
 \mu_k=
 \begin{cases}
  \mu+k(k+1),&0\leq k<r,\\[3pt]
  \displaystyle \mu+k(k+1)-\binom{k-r+2}{2},&k\geq r.
 \end{cases}
\end{equation}
With the usual zero convention, the two expressions also agree at
\(k=r-1\).  The formula already shows that the full sequence is
determined by \((\mu,r)\).  We now use it to compute every
log-concavity defect and determine its sign.

For completeness, we give a direct proof of
\eqref{eq:plane-mu-formula}.  Put \(I=J(f)=(a,b)\).  After interchanging
\(x\) and \(y\), we may assume that
\[
 \ord(a)=r-1\leq\ord(b).
\]
Since \(f\) has an isolated critical point, \(a,b\) form a regular
sequence.  Hence
\[
 0\longrightarrow R
 \xrightarrow{\ c\mapsto c(-b,a)\ }
 R^2\longrightarrow I\longrightarrow0
\]
is exact.  The image of \(\m^kR^2\) is \(\m^kI\).  The first map becomes
zero in \((R/\m^k)^2\) when \(k<r\).  When \(k\geq r\), its image is
isomorphic to \(R/\m^{k-r+1}\).  Therefore
\[
 \length_R\frac{I}{\m^kI}
 =2\binom{k+1}{2}-\binom{k-r+2}{2}
 =k(k+1)-\binom{k-r+2}{2},
\]
where the last binomial coefficient is zero for \(k<r\).  Finally, the
exact sequence
\[
 0\longrightarrow I/\m^kI
 \longrightarrow R/\m^kI
 \longrightarrow R/I
 \longrightarrow0
\]
gives \eqref{eq:plane-mu-formula}.

\begin{proposition}\label{prop:plane-defect}
For $k\geq1$, let $\Delta_k=\mu_k^2-\mu_{k-1}\mu_{k+1}$.
Then
\begin{equation}\label{eq:plane-defect-formula}
 \Delta_k=
 \begin{cases}
  2\bigl(k(k+1)-\mu\bigr),&1\leq k\leq r-2,\\[3pt]
  \displaystyle\frac12\left[
  k^2+(2r-1)k+(3r-2)(r-1)-2\mu
  \right],&k\geq r-1.
 \end{cases}
\end{equation}
The first range is strictly negative.
The second expression is strictly increasing in $k$.
Consequently, the sequence has at most one change from strict log-convexity to log-concavity.
More precisely, the first index with $\Delta_k\geq0$ is
\begin{equation}\label{eq:plane-transition-index}
 \kappa(f)=r-1+\min\left\{s\geq0:
 (r-1)(3r-2)+\frac{s(s+4r-3)}2\geq\mu
 \right\}.
\end{equation}
There is equality $\Delta_{\kappa(f)}=0$ exactly when equality holds inside the braces.
All later defects are positive.
\end{proposition}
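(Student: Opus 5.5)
The plan is to compute every defect directly from the explicit formula \eqref{eq:plane-mu-formula}, which has already been established. The only structural point is to choose, for each index $k$, a single polynomial expression valid at all three indices $k-1,k,k+1$. This splits the argument into the two ranges of \eqref{eq:plane-defect-formula}.

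\emph{First range.} For $1\leq k\leq r-2$ all three indices are $<r$, so $\mu_j=\mu+j(j+1)$. Expanding $\Delta_k$, the $\mu^2$ terms cancel. The linear terms in $\mu$ give
\[
\mu\bigl(2k(k+1)-(k-1)k-(k+1)(k+2)\bigr)=-2\mu .
\]
The constant part is
\[
k(k+1)\bigl(k(k+1)-(k-1)(k+2)\bigr)=2k(k+1).
\]
Hence $\Delta_k=2(k(k+1)-\mu)$. Strict negativity follows from $\mu\geq(r-1)^2$, since the Jacobian ideal lies in $\m^{r-1}$ and so its colength is at least the Milnor number of a generic order-$r$ germ. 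Combined with $k(k+1)\leq(r-2)(r-1)<(r-1)^2$, this gives $\Delta_k<0$. (The bound $\mu\geq(r-1)^2$ also drops out of the length computation in the proof of \eqref{eq:plane-mu-formula}, or from \Cref{prop:strict-jacobian}-type semicontinuity. I would cite the standard inequality $\mu\geq(\ord f-1)^2$.)

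\emph{Second range.} For $j\geq r-2$, the binomial $\binom{j-r+2}{2}$ equals the polynomial $(j-r+2)(j-r+1)/2$, which vanishes at $j=r-2$ and $j=r-1$. Thus, for $k\geq r-1$, all three of $\mu_{k-1},\mu_k,\mu_{k+1}$ are values of one quadratic
\[
P(j)=\mu+j(j+1)-\tfrac12(j-r+2)(j-r+1),
\]
whose leading coefficient is $a=\tfrac12$. For any quadratic $P(j)=aj^2+bj+c$, put $u=2ak+b$. Then
\[
P(k\pm1)=P(k)+a\pm u,
\]
so
\[
P(k)^2-P(k-1)P(k+1)=u^2-2aP(k)-a^2 .
\]
Here this becomes $u^2-P(k)-\tfrac14$. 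Substituting $b=\tfrac12(2r-1)$ and the explicit $P(k)$ and simplifying should yield the second line of \eqref{eq:plane-defect-formula}. This algebra is routine, and I would only spot-check it at $k=r-1$.

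\emph{Monotonicity and the transition index.} From the second line, $\Delta_{k+1}-\Delta_k=k+r>0$, so the defects are strictly increasing for $k\geq r-1$. Since the first range is strictly negative, the sign changes at most once, from negative to nonnegative, and every defect after the first nonnegative one is positive. Setting $k=r-1+s$ with $s\geq0$ gives
\[
k^2+(2r-1)k=(r-1)^2+(2r-1)(r-1)+s^2+(4r-3)s .
\]
Hence
\[
\Delta_{r-1+s}=(r-1)(3r-2)+\tfrac12 s(s+4r-3)-\mu ,
\]
which gives \eqref{eq:plane-transition-index} and the equality criterion $\Delta_{\kappa(f)}=0$. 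The main obstacle is just bookkeeping: making sure that the quadratic form of $\mu_j$ is valid down to $j=r-2$, so that the boundary index $k=r-1$ falls under the second expression. The zero of the polynomial binomial at $j=r-2,r-1$ handles this.
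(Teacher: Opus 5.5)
Your proposal is correct and is essentially the paper's proof. You substitute the plane formula, get strict negativity on $1\le k\le r-2$ from $\mu\ge(r-1)^2$ (the paper obtains this from the local B\'ezout inequality $\mu=i_0(f_x,f_y)\ge\ord(f_x)\ord(f_y)$), and read off $\kappa(f)$ from the strictly increasing second expression via $k=r-1+s$; your quadratic identity and the vanishing check at $j=r-2,r-1$ just make the paper's ``substitute'' step explicit. One small caution: $J\subseteq\m^{r-1}$ alone only gives $\mu\ge\binom{r}{2}$, so rely on the cited inequality or on your parenthetical route, which does work: $\mu_k\ge\binom{k+r}{2}$ together with the plane formula for $k\ge r$ yields exactly $\mu\ge(r-1)^2$.
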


\begin{proof}
Substitute \eqref{eq:plane-mu-formula} into the definition of $\Delta_k$.
This gives the two expressions in \eqref{eq:plane-defect-formula}.
The classical multiplicity bound
\begin{equation}\label{eq:plane-milnor-lower}
 \mu\geq(r-1)^2
\end{equation}
implies that the first expression is negative.
Indeed,
\[
 \mu=i_0(f_x,f_y)
 \geq\ord(f_x)\ord(f_y)
 \geq(r-1)^2,
\]
where the first inequality is the local B\'ezout inequality.
This is also a special case of Teissier's mixed-multiplicity inequalities
\cite{Teissier1977Minkowski}.
Indeed, for $k\leq r-2$,
\[
 k(k+1)\leq(r-2)(r-1)< (r-1)^2.
\]

The second expression has positive first difference for $k\geq r-1$.
Write $k=r-1+s$.
Then the condition $\Delta_k\geq0$ becomes
\[
 (r-1)(3r-2)+\frac{s(s+4r-3)}2\geq\mu.
\]
This proves the formula for $\kappa(f)$ and all sign statements.
\end{proof}

\begin{corollary}\label{cor:plane-global-lc}
The full sequence $(\mu_k)_{k\geq0}$ is log-concave at every positive index if and only if
\[
 r=2\quad\text{and}\quad \mu\leq4.
\]
It is strictly log-concave at every positive index if and only if
\[
 r=2\quad\text{and}\quad \mu\leq3.
\]
\end{corollary}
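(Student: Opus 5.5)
The plan is to read everything off \Cref{prop:plane-defect}. Log-concavity at every positive index means \(\Delta_k\geq0\) for all \(k\geq1\). By that proposition, \(\Delta_k<0\) whenever \(1\leq k\leq r-2\), and for \(k\geq r-1\) the defect is strictly increasing in \(k\). So the sequence is log-concave at every positive index if and only if two conditions hold. First, the negative range \(1\leq k\leq r-2\) must be empty, which means \(r\leq 2\); since \(f\in\m^2\), this forces \(r=2\). Second, the first relevant defect must be nonnegative. For \(r=2\) that defect is \(\Delta_1\), since \(1=r-1\) lies in the second range.

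Substituting \(r=2\), \(k=1\) into the second expression of \eqref{eq:plane-defect-formula} gives
\[
 \Delta_1=\tfrac12\bigl[1+3+4-2\mu\bigr]=4-\mu .
\]
Hence \(\Delta_1\geq0\) if and only if \(\mu\leq4\), and \(\Delta_1>0\) if and only if \(\mu\leq3\). Monotonicity then gives \(\Delta_k\geq\Delta_1\) for all \(k\geq1\). Moreover, for \(k\geq2\) the inequality \(\Delta_k>\Delta_1\geq0\) is strict, so all later defects are positive.

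The converse directions are as follows. If \(r\geq3\), then \(k=1\) lies in the first range and \(\Delta_1<0\), so log-concavity fails at \(k=1\). If \(r=2\) and \(\mu\geq5\), then \(\Delta_1<0\). If \(r=2\) and \(\mu=4\), then \(\Delta_1=0\), so the sequence is log-concave but not strictly log-concave at \(k=1\). This gives both characterizations. As a consistency check, \(\kappa(f)=1\) in \eqref{eq:plane-transition-index} exactly when \((r-1)(3r-2)=4\geq\mu\) with \(r=2\), which agrees.

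The only step needing care is the substitution. It must use the correct branch at \(k=r-1=1\); the proposition states that the second branch covers \(k\geq r-1\), so no separate boundary check is needed. I would also double-check \(\Delta_1\) numerically against \eqref{eq:plane-mu-formula} for \(r=2\): \(\mu_0=\mu\), \(\mu_1=\mu+2\), \(\mu_2=\mu+6-1=\mu+5\). This gives \((\mu+2)^2-\mu(\mu+5)=4-\mu\), confirming the computation. No genuine obstacle is expected.
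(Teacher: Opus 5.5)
Your proposal is correct and matches the paper's proof: for $r\geq3$ the index $k=1$ falls in the strictly negative first range of \Cref{prop:plane-defect}, and for $r=2$ the second branch gives $\Delta_1=4-\mu$, after which the defects increase strictly. Your numerical cross-check of $\Delta_1$ from the plane formula $\mu_k=\mu+k(k+1)-\binom{k-r+2}{2}$ is also correct.
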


\begin{proof}
If $r\geq3$, the first range in \eqref{eq:plane-defect-formula} is nonempty and negative.
If $r=2$, the second formula gives $\Delta_1=4-\mu$ and then increases strictly.
\end{proof}

\begin{example}\label{ex:A8}
For $f=x^2+y^9$, one has $(r,\mu)=(2,8)$.
Formula \eqref{eq:plane-mu-formula} gives
\[
 (\mu_0,\mu_1,\mu_2,\mu_3,\mu_4)=(8,10,13,17,22).
\]
For a direct certificate, \(J=(x,y^8)\); bases of the first three
quotients are
\[
\begin{aligned}
 R/J &: 1,y,\ldots,y^7,\\
 R/\m J &: 1,y,\ldots,y^8,x,\\
 R/\m^2J &: 1,y,\ldots,y^9,x,xy,x^2.
\end{aligned}
\]
Thus
\[
 \Delta_1=10^2-8\cdot13=-4,
 \qquad
 \Delta_2=13^2-10\cdot17=-1.
\]
The values \(\mu_1,\mu_2,\mu_3\) also agree with
\cite[Table~2]{HussainZuo2026}.  This shows that the
higher-Milnor part of Conjecture~1.10 does not hold. %
\end{example}

\begin{example}\label{ex:E6-tau}
Let $f=x^3+y^4$, of type $E_6$.
In \(A=\C\{x,y\}/(x^3+y^4)\), which is free over \(\C\{y\}\) with
basis \(1,x,x^2\), the first three relevant ideals are
\[
\begin{aligned}
 \bar\m J&=(y^4,x^2y,xy^3),\\
 \bar\m^2J&=(y^5,xy^4,x^2y^2),\\
 \bar\m^3J&=(y^6,xy^5,x^2y^3).
\end{aligned}
\]
The corresponding quotient bases have respectively \(8,11,14\) elements.
Together with the ordinary Tjurina basis of size \(6\), this gives
\[
 (\tau_0,\tau_1,\tau_2,\tau_3)=(6,8,11,14).
\]
Hence
\[
 \tau_1^2-\tau_0\tau_2=64-66=-2.
\]
Thus the higher-Tjurina parts of Conjecture~1.10 in
\cite{HussainZuo2026}  do not hold. %
 Together, the two examples show that eventual log-concavity does not control the first layers.
\end{example}

The first statement below is already contained in the plane formula.
The converse statement and the failure of contact classification are
further consequences.

\begin{corollary}\label{cor:plane-information}
For an isolated plane germ, the full sequence $(\mu_k)_{k\geq0}$ is determined by $(\mu,r)$.
Conversely, the sequence determines $(\mu,r)$.
It does not determine the contact-equivalence class.
\end{corollary}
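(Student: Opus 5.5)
The first assertion is immediate from \eqref{eq:plane-mu-formula}: for \(k<r\) and \(k\geq r\) the value \(\mu_k\) is an explicit function of \((\mu,r,k)\).

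For the converse, I will read off both invariants from the sequence. First, \(\mu=\mu_0\). Next, I will look at the first differences \(\mu_{k+1}-\mu_k\). By \eqref{eq:plane-mu-formula} they equal \(2(k+1)\) as long as \(k+1\leq r-1\), that is, as long as both indices lie in the range where the binomial correction vanishes; recall that the two expressions agree at \(k=r-1\). At \(k=r-1\) the correction \(\binom{k-r+2}{2}\) jumps from \(0\) to \(\binom{2}{2}=1\), so \(\mu_r-\mu_{r-1}=2r-1\neq 2r\). Hence \(r\) is the smallest \(k\geq1\) with
\[
 \mu_k-\mu_{k-1}\neq 2k.
\]
I will check the edge case \(r=2\) explicitly: there \(\mu_1-\mu_0=2\) and \(\mu_2-\mu_1=3\), so the criterion gives \(r=2\). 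Since \(f\) has an isolated critical point and lies in \(\m^2\), we have \(r\geq 2\), and the recovery is unambiguous.

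For the failure of contact classification, I plan to exhibit two isolated plane germs with the same \((\mu,r)\) that are not contact equivalent. The simplest choice is among simple singularities with \(r=3\) and equal Milnor number. A candidate pair is \(E_7\) versus \(D_7\): \(E_7=x^3+xy^3\) has \(\mu=7\) and \(r=3\), and \(D_7=x^2y+y^6\) also has \(\mu=7\) and \(r=3\). Their sequences coincide by the first part.

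To see that they are not contact equivalent, I will use an invariant preserved by contact equivalence, namely the number of branches or the reduced tangent cone. The tangent cone of \(E_7\) is \(x^3\), a single line, while that of \(D_7\) is \(x^2y\), two distinct lines. The tangent cone up to linear change is a contact invariant, because contact equivalence multiplies \(f\) by a unit and composes with a diffeomorphism, and the initial form transforms by a linear map and a nonzero scalar. Alternatively, one can compare the Tjurina algebras (for simple singularities \(\tau=\mu\)) and invoke Mather--Yau \cite{MatherYau1982} with the ADE classification.

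The only real obstacle is keeping the invariant argument clean, and the tangent-cone argument suffices.
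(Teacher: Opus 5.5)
Your proposal is correct and follows essentially the paper's route: the plane formula gives the first claim, $\mu=\mu_0$ together with the first jump in the differences (you use first differences $\mu_k-\mu_{k-1}$, the paper uses second differences) recovers $r$, and a pair of germs with equal $(\mu,r)$ but linearly inequivalent tangent cones $x^3$ and $x^2y$ gives the last claim (the paper takes $x^3+y^4$ and $x^2y+y^5$, you take $E_7$ and $D_7$). One small caution: the number of branches would not separate your pair, since $x(x^2+y^3)$ and $y(x^2+y^5)$ both have two branches, so keep the tangent-cone argument, which is the one you actually carry out.
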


\begin{proof}
The first statement is \eqref{eq:plane-mu-formula}.
The value $\mu$ is $\mu_0$.
For \(k\geq1\), set
\[
 c_k=\mu_{k+1}-2\mu_k+\mu_{k-1}.
\]
Formula \eqref{eq:plane-mu-formula} gives
\[
 c_k=2\quad(1\leq k\leq r-2),
 \qquad
 c_k=1\quad(k\geq r-1).
\]
Hence
\[
 r=1+\min\{k\geq1:c_k=1\}.
\]

For the last statement, compare
\[
 f_D=x^2y+y^5\quad\text{and}\quad f_E=x^3+y^4.
\]
Both have $(\mu,r)=(6,3)$, so all their higher Milnor numbers agree.
Their degree-three tangent cones are $x^2y$ and $x^3$.
These binary cubics are not linearly equivalent.
Since contact equivalence preserves the tangent cone up to a nonzero scalar and a linear change of variables, the two germs are not contact equivalent.
Together with \Cref{prop:plane-defect,cor:plane-global-lc}, this proves
\Cref{thm:main-d}.
\end{proof}

\section{Projective moduli and information loss}\label{sec:geometry-loss}

The results above show two forms of rigidity.
In the plane, the sequence depends only on $(\mu,r)$.
For an ordinary multiple point in any dimension, it depends only on $(n,r)$.
We now explain the geometry that these numbers miss.

Suppose two germs $f$ and $g$ of multiplicity $r$ are contact equivalent.
Then
\[
 f=u\cdot(g\circ\phi)
\]
for a unit $u$ and a local coordinate change $\phi$.
If $L$ is the linear part of $\phi$, their lowest homogeneous terms satisfy
\[
 f_r=u(0)\,g_r\circ L.
\]
Thus contact equivalence of the germs forces projective equivalence of their tangent-cone hypersurfaces.

\begin{proposition}
\label{prop:projective-moduli-invisible}
Fix $n$ and $r$.
Let $X$ and $X'$ be smooth degree-$r$ hypersurfaces in $\mathbb P^{n-1}$ that are not projectively equivalent.
Let $f_r$ and $f'_r$ be their homogeneous equations, viewed as germs at the vertices of the affine cones.
Then the germs are not contact equivalent, but
\[
 \mu_k(f_r)=\mu_k(f'_r)\quad\text{for every }k\geq0.
\]
The same statements remain true after arbitrary terms of degree greater
than \(r\) are added to either equation.
\end{proposition}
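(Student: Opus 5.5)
The proof splits into two independent parts: non-equivalence, which comes from the tangent-cone computation at the start of this section, and equality of the higher Milnor numbers, which is \Cref{thm:main-a}.

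\emph{Non-equivalence.} Suppose \(f_r\) and \(f'_r\) were contact equivalent, so \(f_r=u\cdot(f'_r\circ\phi)\) with \(u\) a unit and \(\phi\) a local coordinate change. Let \(L\) be the linear part of \(\phi\); it is invertible since \(\phi\) is a coordinate change. Expand \(u\cdot(f'_r\circ\phi)\) by degree. The lowest-degree part is \(u(0)\,f'_r\circ L\), which is homogeneous of degree \(r\). So \(f_r=u(0)\,f'_r\circ L\) as polynomials. The projective automorphism induced by \(L\) then carries \(X\) onto \(X'\) (or the reverse, depending on the direction of \(L\)). This contradicts the hypothesis that \(X\) and \(X'\) are not projectively equivalent.

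The same argument applies after adding higher terms. Suppose \(F=f_r+\cdots\) and \(F'=f'_r+\cdots\) with all added terms of degree greater than \(r\). Then \(\ord F=\ord F'=r\), and the lowest homogeneous parts are still \(f_r\) and \(f'_r\). The degree-\(r\) comparison is unaffected by the higher terms, so \(F\) and \(F'\) are not contact equivalent either.

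\emph{Equality of higher Milnor numbers.} The key step is to check that every germ in the statement satisfies the hypotheses of \Cref{def:ordinary}.

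First, the homogeneous germ \(f_r\) has an isolated critical point. Since \(X\) is smooth, Euler's identity shows that the partials \(q_i\) have no common projective zero. Hence \(Q\) is \((x_1,\ldots,x_n)\)-primary, and \(f_r\) has an isolated critical point at the origin.

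Second, a germ \(F=f_r+\cdots\) with higher-order terms added also has an isolated critical point. By \Cref{prop:strict-jacobian}, or directly because the initial forms of the partials of \(F\) form a regular sequence generating an \(\m\)-primary form ideal, \(J(F)\) is \(\m\)-primary. In the paper's setup this is implicit, but it should be stated explicitly here.

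With the hypotheses verified, \Cref{thm:main-a} applies to both \(F\) and \(F'\). It gives \(\mu_k(F)=\mu_k(f_r)\) and \(\mu_k(F')=\mu_k(f'_r)\) for all \(k\geq0\). Equivalently, \Cref{prop:universal-formula} gives the closed formula \eqref{eq:binomial-formula} for all four germs. That formula depends only on \(n\) and \(d=r-1\), so all four sequences coincide.

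The only step needing care is the isolated-singularity check for the perturbed germs, that is, the \(\m\)-primarity of \(J(F)\). The approach is to use the fact that an ideal whose form ideal is \(\m\)-primary is itself \(\m\)-primary, since \(\gr_{\m}(R/J)\simeq S/\inm(J)\) has finite length. The containment \(\inm(J(F))\supseteq Q\) is immediate, because each partial \(F_{x_i}\) has initial form \(q_i\). This gives finite length of \(S/\inm(J(F))\), hence of \(R/J(F)\).
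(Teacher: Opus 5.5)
Your proof is correct and follows the paper's proof: non-equivalence comes from comparing lowest-degree terms, $f_r=u(0)\,f'_r\circ L$, and equality of all $\mu_k$ comes from \Cref{thm:main-a}, whose formula depends only on $(n,r)$. Your direct check that $J(F)$ is $\m$-primary after adding higher terms, using $Q\subseteq\inm(J(F))$ and $\gr_{\m}(R/J(F))\simeq S/\inm(J(F))$, is a worthwhile step that the paper leaves implicit; rely on it rather than on \Cref{prop:strict-jacobian}, because citing that proposition here would be formally circular, since \Cref{def:ordinary} already assumes an isolated critical point.
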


\begin{proof}
The tangent-cone argument above gives non-equivalence.
The equality follows from \Cref{thm:main-a}.
\end{proof}

\begin{example}\label{ex:plane-quartic-moduli}
The projective equivalence classes of smooth plane quartics form a
six-dimensional moduli space.
Their affine cones are ordinary quartic points in \(\C^3\).  If two such
quartics are not projectively equivalent, their affine-cone germs are
not contact equivalent.  Nevertheless, all these germs have
\[
 (\mu_k)_{k\geq0}=(27,30,39,57,84,120,165,\ldots).
\]
The sequence is therefore constant on this six-dimensional moduli space,
so it does not detect the moduli parameter.
\end{example}

\begin{example}\label{ex:Hesse-pencil}
Consider
\[
 f_\lambda=x^3+y^3+z^3-3\lambda xyz,
 \qquad \lambda^3\neq1.
\]
The projective curves $V_+(f_\lambda)$ are smooth members of the Hesse pencil.
Their $j$-invariant varies with $\lambda$; see \cite{ArtebaniDolgachev2009}.
Choose two parameters with different $j$-invariants.
The two cone germs are not contact equivalent, yet both have
\[
 (\mu_k)_{k\geq0}=(8,11,20,35,56,\ldots).
\]
This is a one-parameter version of \Cref{prop:projective-moduli-invisible}.
\end{example}

The higher terms of a germ are also invisible in this class.

\begin{example}\label{ex:higher-term-direction}
Let
\[
 f_t=x^4+y^4+z^4+t x^2y^2z.
\]
The last monomial has degree five, so every $f_t$ has the same smooth quartic tangent cone.
By \Cref{thm:main-a}, every $\mu_k(f_t)$ is independent of $t$.
At $t=0$, the class of $x^2y^2z$ is nonzero in
\[
 \frac{\C\{x,y,z\}}{J(f_0)}
 =\frac{\C\{x,y,z\}}{(x^3,y^3,z^3)}.
\]
Hence this family gives a nonzero first-order right-deformation direction.
More precisely, this is the extended-right convention; for coordinate
changes fixing the origin, the tangent ideal is $\m J(f_0)$, and the class
remains nonzero there as well.  Thus the displayed direction is not
infinitesimally trivial under either convention, although all numerical
higher Milnor invariants stay fixed. %
For deformation-theoretic background, see \cite{GreuelLossenShustin2007}.
\end{example}

For fixed \((n,r)\), the dimensions of the graded pieces of
\[
 A_X=S/J(f_r)
\]
are the same for every smooth tangent cone \(X\).  The ring also has
multiplication maps
\[
 (A_X)_a\otimes_{\C}(A_X)_b\longrightarrow(A_X)_{a+b}.
\]
Formula \eqref{eq:tail-formula} uses only the dimensions
\(h_a=\dim_{\C}(A_X)_a\).  It does not retain these multiplication maps.

\section*{AI disclosure}

ChatGPT was used to assist with checking mathematical arguments and improving the exposition and language. The authors take full responsibility for the mathematical content and final presentation of the paper.

\bibliographystyle{alpha}
\bibliography{references}

\end{document}